\def\bfb{\mathbf{b}}
\def\bfs{\mathbf{s}}
\def\bfI{\mathbf{I}}
\def\mE{\mathcal{E}}
\def\mI{\mathcal{I}}
\def\mK{\mathcal{K}}
\def\mN{\mathcal{N}}
\def\mP{\mathcal{P}}
\def\mS{\mathcal{S}}
\def\mZ{\mathcal{Z}}
\newtheorem{remark}[theorem]{Remark}
\title{Mathematical framework for abdominal electrical impedance tomography to assess fatness}
\author{Habib Ammari\thanks{\footnotesize Department of Mathematics,
ETH Z\"urich,
R\"amistrasse 101, CH-8092 Z\"urich, Switzerland (habib.ammari@math.ethz.ch).} \and Hyeuknam Kwon\thanks{\footnotesize Department of Computational
Science and Engineering, Yonsei University, 50 Yonsei-Ro,
Seodaemun-Gu, Seoul 120-749, Korea (hyeuknamkwon@gmail.com, seoj@yonsei.ac.kr).} \and Seungri Lee\footnotemark[2] \and Jin Keun Seo\footnotemark[2]}
\begin{document}

\maketitle

\begin{abstract}
This paper presents a static electrical impedance tomography (EIT) technique that evaluates abdominal obesity by estimating the thickness of subcutaneous fat. EIT has a fundamental drawback for absolute admittivity imaging because of its lack of reference data for handling the forward modeling errors. To reduce the effect of boundary geometry errors in imaging abdominal fat, we develop a depth-based reconstruction method that uses a specially chosen current pattern to construct reference-like data, which are then used to identify the border between subcutaneous fat and muscle. The performance of the proposed method is demonstrated by numerical simulations using 32-channel EIT system and human like domain.
\end{abstract}

\begin{keywords}
 abdominal electrical impedance tomography, reference-like data, outermost region estimation, sensitivity matrix. 
\end{keywords}

\begin{AMS}
  35R30, 49N45, 65N21. 
\end{AMS}

\section{Introduction}\label{sec:intro}
Multi-frequency electrical impedance tomography (mfEIT) \cite{alberti,mfeit3,mfeit2,mfeit1} can be applied to the non-invasive assessment of abdominal obesity, which is a predictor of health risk. MfEIT data of the boundary current-voltage relationship at various frequencies of $<$ 1 MHz reflect the regional distribution of body fat, which is less conductive than water and tissues such as muscle, and can therefore be used to estimate the thicknesses of visceral and subcutaneous adipose tissue. This diagnostic information can be used to assess abdominal obesity, which is considered a cause of metabolic syndrome as well as a risk factor for various other health conditions \cite{Carey:2012,Despres:2006,Razay:2006,Steven:2001,Yusuf:2004}.

The spatial resolution of computed tomography (CT) and magnetic resonance (MR) images is high enough for the assessment of abdominal obesity \cite{Machann:2010,Machann:2005}; however, there are concerns and limitations regarding their use for this purpose; e.g., CT exposes the subject to ionizing radiation, while MR imaging has poor temporal resolution.

Electrical impedance tomography (EIT) is a noninvasive, low-cost imaging technique that provides real-time data without using ionizing radiation. However, experiences over the past three decades have not succeeded in making EIT robust against forward modeling errors such as those associated with uncertainties in boundary geometry and electrode positions. In time-difference EIT, which images changes in the conductivity distribution with time, forward modeling errors are effectively handled and largely eliminated when the data are subtracted from the reference data at a predetermined time. Time-difference EIT for imaging the time changes in the conductivity distribution effectively handle the forward modeling errors, which are somewhat eliminated from the use of time-difference data subtracting reference data at a predetermined fixed time.
 
In static EIT, however, there are no reference data that can be used to eliminate the forward modeling error \cite{liliana}. Creating reference-like data would be the key issue in static EIT.

This paper proposes a new reconstruction method that uses prior anatomical information, at the expense of spatial resolution, to compensate for this fundamental drawback of static EIT and improve its reproducibility. In the case of  abdominal EIT, it is possible to use a spatial prior to handle its inherent ill-posed nature. The proposed method employs a depth-based reconstruction algorithm that takes into account the background region, boundary geometry, electrode configuration, and current patterns. Here, we could take advantage of recent advances in 3D scanner technology to minimize the forward modeling error by extracting accurate boundary geometry and electrode positions.

The proposed method uses a specially chosen current pattern to obtain a depth-dependent data set, generating reference-like data that are used to outline the borders between fat and muscle.
From the relation between a current injected through one pair of electrodes and the induced voltage drop though the other pair of electrodes, we obtain the transadmittance, which is the ratio of the current to the voltage. Hence, the transadmittance depends on the positions of two pairs of electrodes, body geometry, and admittivity distribution. We can extract the corresponding apparent admittivity in term of two pairs of electrodes from the transadmittance divided by a factor involving electrode positions and the body geometry. This apparent admittivity changes with the choice of pairs of electrodes.
(In the special case when the subject is homogeneous, the apparent admittivity does not depend on electrode positions.)
Noting that the change in apparent admittivity in the depth direction can be generated by varying the distance between electrodes, we could probe the admittivity distribution by developing a proper and efficient algorithm based on a least-squares minimization. The performance of the proposed least-squares approach is demonstrated using numerical simulations with a 32-channel EIT system and human like domain.

Future research study is to adopt an mfEIT technique to exploit the frequency-dependent behavior of human tissue \cite{Ammari:2016}. The distribution of visceral fat in abdominal region can then be estimated from the linear relation between the data and the admittivity spectra, and thus obtain a clinically useful absolute conductivity image.

\section{Basic mathematical setting}\label{sec:basic}
\subsection{Model setting}\label{sec:basic-model}
Let an imaging object occupy three (or two) dimensional domain $\Omega$ with its admittivity distribution $\gamma=\sigma+i\omega\epsilon$ where $\sigma$ is the conductivity, $\epsilon$ the permittivity, and $\omega$ the angular frequency.
The domain $\Omega$ can be divide by 4 subregions; subcutaneous fat region $\Omega_f$, muscle region $\Omega_m$, bone region $\Omega_b$, and remaining region $\Omega_r$ as shown in Figure \ref{fig:illustrate-abdomen-tissue-spectroscopy} (a).  
\begin{figure}[htb]
  \begin{center}\begin{tabular}{cc}
	  \includegraphics[height=4.5cm]{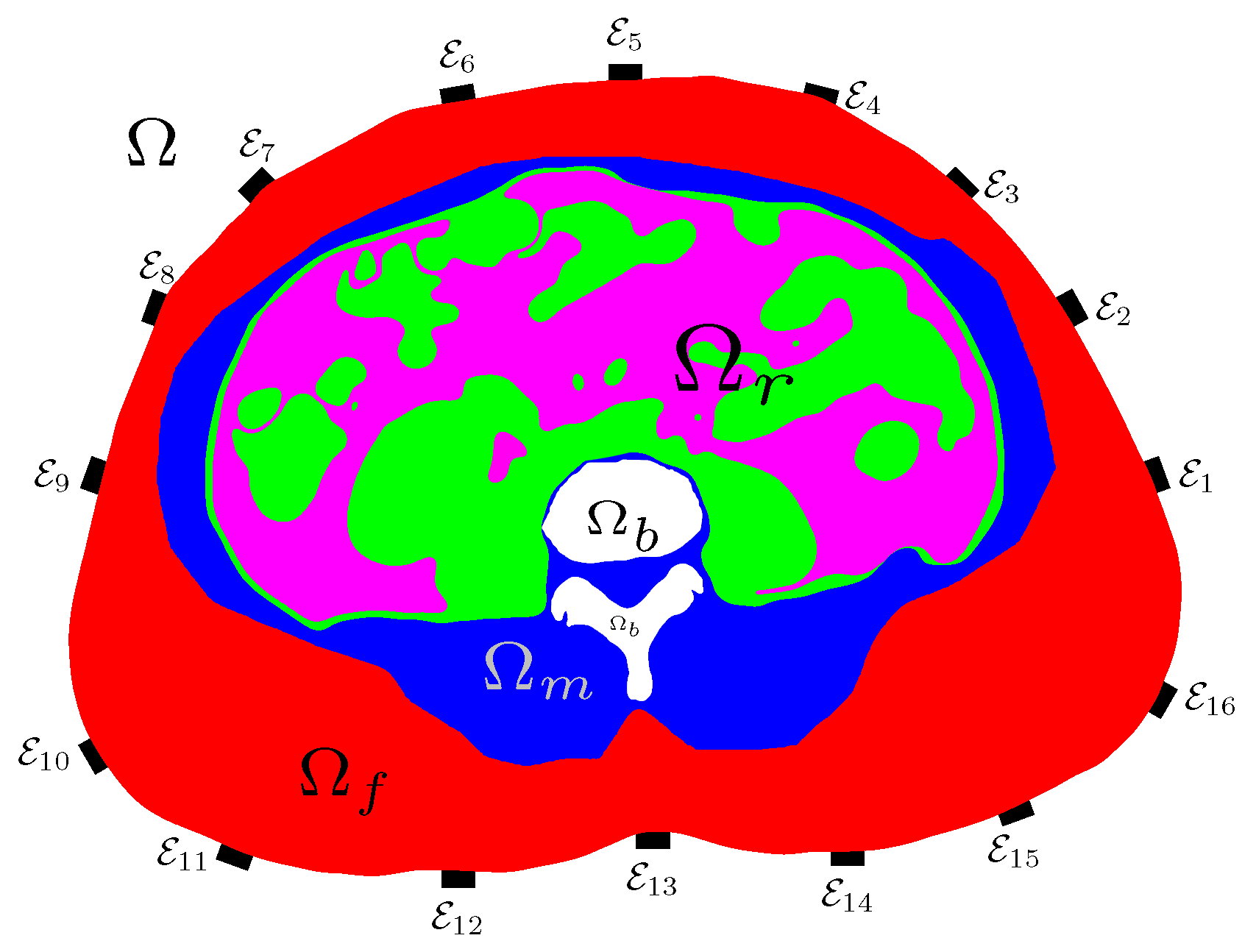} &
	  \includegraphics[height=4.5cm]{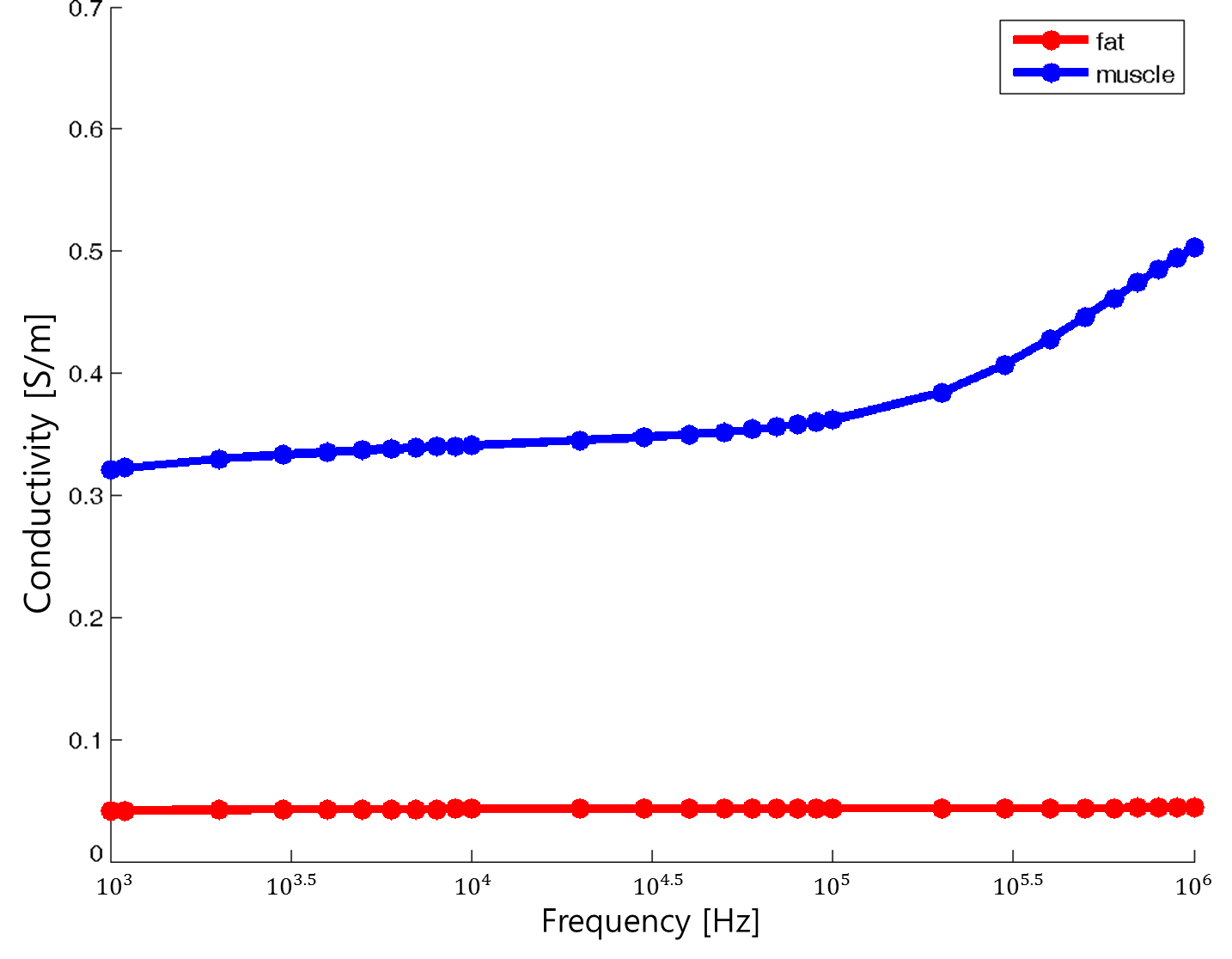} \\
		(a) & (b)
	\end{tabular}\end{center}
	\caption{(a) Simplified abdomen image from CT with $N_E=16$ number of electrodes. Subdomains ($\Omega_f$, $\Omega_m$, $\Omega_b$, $\Omega_r$) are depicted: Red color represents subcutaneous fat region, blue represents muscle region, white represent bone region, pink represents visceral region, and green represents abdominal organs. (b) The electrical conductivity spectroscopy of fat and muscle over frequency range from 1 KHz to 1 MHz.}
	\label{fig:illustrate-abdomen-tissue-spectroscopy}
\end{figure}
The remaining region $\Omega_r$ includes visceral fat, organ, and etc.
The admittivity distribution $\gamma$ in each subregion can be represented as
\begin{equation}
  \gamma(x)=
	\left\{\begin{array}{ll}
	  \gamma_f & \mbox{for}~ x\in\Omega_f , \\
	  \gamma_m & \mbox{for}~ x\in\Omega_m , \\
	  \gamma_b & \mbox{for}~ x\in\Omega_b , \\
		\gamma_r & \mbox{for}~ x\in\Omega_r . 
	\end{array}\right.
\end{equation}

\begin{remark}
The conductivity value of biological tissue is depending on its cell structure and frequency \cite{Ammari:2016}.
The cell structure of fat-tissue and muscle-tissue are quite different \cite{adipeux}.
Hence, the electrical conductivity values of fat and muscle have different aspect over frequency.
As shown in Figure \ref{fig:illustrate-abdomen-tissue-spectroscopy} (b), the spectra of electric conductivity of fat as a function of frequency is different from the one for muscle \cite{Hasgall:2015}.
Since admittivity changes abruptly between subcutaneous fat and muscle, the border between $\Omega_f$ and $\Omega_m$ can be identified by using electrical distribution properties of fat and muscle.
\end{remark}

To inject currents, we attach $N_E$ electrodes $\mE^h_1,\mE^h_2,\cdots,\mE^h_{N_E}$ on the boundary $\partial\Omega$ as shown in Figure \ref{fig:illustrate-abdomen-tissue-spectroscopy} (a).
Here, the superscript $h$ stands for the radius of the circular electrode.
We inject a sinusoidal current of $I$ mA at angular frequency $\omega$ through the pair of electrodes $\mE^h_{k^+}, \mE^h_{k^-}$
for $(k^+,k^-)\in\{(i,j)~:~ i\neq j, ~\mbox{and}~ i,j=1,2,\cdots,N_E\}$. Then the resulting time-harmonic potential $\widetilde{u}^h_k$ is governed by
\begin{equation}
  \left\{\begin{array}{l}
    \nabla \cdot (\gamma \nabla \widetilde{u}^h_k ) = 0 \quad \text{in}~ \Omega , \\
		\left.\widetilde{u}^h_k+z_{k,l}\gamma\frac{\partial \widetilde{u}^h_k}{\partial n}\right|_{\mE^h_{l}} = \widetilde{P}^{h}_{k,l} \quad \mbox{for}~ l=1,\cdots,N_E , \\
		\int_{\mE^h_{k^+}}\gamma\frac{\partial \widetilde{u}^h_k}{\partial n} ~ds = I =-\int_{\mE^h_{k^-}}\gamma\frac{\partial \widetilde{u}^h_k}{\partial n} ~ds , \\
		\int_{\mE^h_{l}}\gamma\frac{\partial \widetilde{u}^h_k}{\partial n} ~ds = 0 \quad\mbox{for}~ l\neq k^\pm ,  \\
		\gamma\frac{\partial \widetilde{u}^h_k}{\partial n} = 0 \quad \mbox{on}~\partial\Omega\setminus\left(\cup_{l=1}^{N_E} \mE^h_{l}\right), 
  \end{array}\right.
	\label{eq:CEM}
\end{equation}
where $n$ is the outward unit normal vector to the boundary $\partial\Omega$, $z_{k,l}$ is a contact impedance, and $\widetilde{P}^h_{k,l}$ is the corresponding constant potential on the electrode $\mE^h_{l}$ for $l=1,2,\cdots,N_E$ \cite{Somersalo:1992} as shown in Figure \ref{fig:CEM-sol-u}. It is worth emphasizing that the solution $\widetilde{u}^h_k$ depends on the electrode radius $h$.
\begin{figure}[htb]
  \begin{center}\begin{tabular}{c}
    \includegraphics[width=12cm]{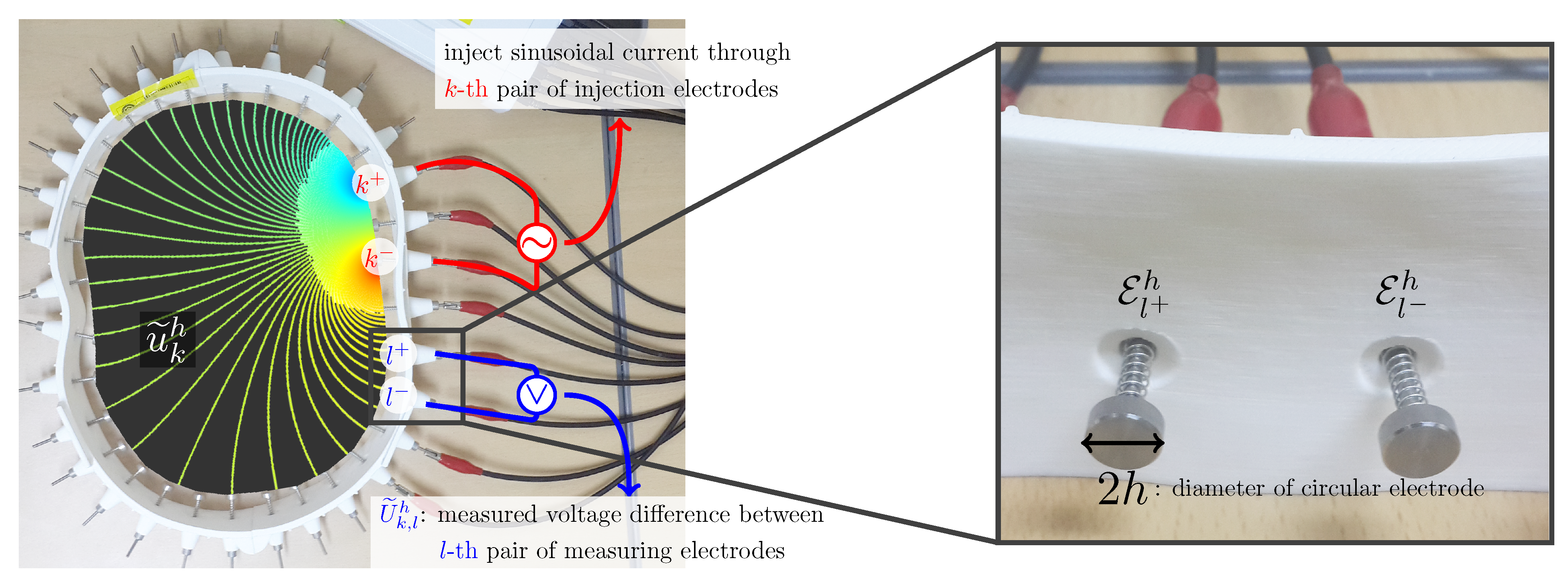} \\
		(a) \hspace{6cm} (b)
	\end{tabular}\end{center}
	\caption{(a) Visualization of induced electric potential $\widetilde{u}^h_k$ with $\gamma=1$ when the current is injected through the pair of electrodes $\mE^h_{k^+}, \mE^h_{k^-}$. (b) The attached face of the used electrodes is a disk with radius $h$.}
	\label{fig:CEM-sol-u}
\end{figure}
Let $\mP$ denote a set of indexes $(k^+,k^-,l^+,l^-)$ indicating inject-measure electrodes, {\it e.g.} $(1,2,3,4)$ means that the electrodes $\mE_1,\mE_2$ are used for current injection and the electrodes $\mE_3,\mE_4$ are used for voltage measurements.
In the static EIT, the following voltage difference data $\widetilde{U}^h_{k,l}$ is used
\begin{equation}
  \widetilde{U}^h_{k,l} := \widetilde{P}^h_{k,l^+}-\widetilde{P}^h_{k,l^-}
  \label{eq:U-CEM}
\end{equation}
for $(k^+,k^-,l^+,l^-)\in\mP$.
In practice, the choice of $(k^+,k^-,l^+,l^-)$ satisfies $l^+\neq k^\pm,~l^-\neq k^\pm$ to avoid unknown contact impedance $z_{k,l^\pm}$ \cite{Paulson:1992}.

Throughout this paper, we assume that the size of the electrodes is very small compared with the size of $\partial\Omega$ so that the solution $u_k$ of the following point electrode model \cite{Hanke:2011} is a good approximation to the solution $\widetilde{u}^h_{k}$ of (\ref{eq:CEM}):
\begin{equation}
  \left\{\begin{array}{ll}
    \nabla \cdot (\gamma \nabla u_{k} ) = 0 & \text{in}~ \Omega ,\\
    \gamma(x)\frac{\partial}{\partial n}u_{k}(x) = I (\delta(x-p_{k^+}) - \delta(x-p_{k^-})) & \text{for}~ x\in\partial\Omega , \\
		\int_{\partial\Omega} u_k ~ds = 0 , 
  \end{array}\right.
	\label{eq:PEM}
\end{equation}
where $p_k$ is the center of the disk $\mE_k^h$ and $\delta$ is the Dirac delta function.
We define the data $U_{k,l}$ by
\begin{equation}
  U_{k,l}
  := u_k(p_{l^+})-u_k(p_{l^-})
	\quad \mbox{for }  (k^+,k^-,l^+,l^-)\in\mP .
  \label{eq:U-PEM}
\end{equation}
We should note that $\widetilde{U}^h_{k,l}\rightarrow U_{k,l}$ as $h\rightarrow0$ except for $k=l$; see Appendix \ref{sec:appendix} (Lemma \ref{lem:CEMisPEM} and Theorem \ref{thm:CEMisPEM}).

\subsection{Layer potentials}
The usage of layer potential enables us to represent the solutions of (\ref{eq:PEM}) and the data (\ref{eq:U-PEM}); see \cite{book,Seo:2012}.
In this section, we introduce layer potentials and represent homogeneous admittivity using the layer potential function.
We consider the three-dimensional case and define the single layer potential $\mS$ by
\begin{equation}
  \mS[f](x):=\int_{\partial\Omega}\Phi(x,y)f(y)ds_y
  \quad\mbox{for}~x\in\Omega, 
\end{equation}
where $\Phi(x,y):=\frac{1}{4\pi|x-y|}$ is the fundamental solution in $\Bbb R^3$.
It is well known that
\begin{equation}
  \lim_{t\rightarrow0^+}\langle n(x),\nabla\mS[f](x\pm tn(x))\rangle =(\pm\frac{1}{2}\mI+\mK^*)[f](x)
  \quad(x\in\partial\Omega),
\end{equation}
where $\mI$ is the identity operator, $\mK^*$ is the dual operator of $\mK$ and $\mK:L^p(\partial\Omega)\rightarrow L^p(\partial\Omega)$, $1<p<\infty$, is the trace operator defined by
\begin{equation}
  \mK[f](x) = \int_{\partial\Omega}\frac{\langle y-x,n(y)\rangle}{4\pi|x-y|^3}f(y) ds_y
  \quad(x\in\partial\Omega).
\end{equation}

For $x\in\Omega$, we define the Neumann function $\mN_\gamma(x,y)$ by
\begin{equation}
  \left\{\begin{array}{rcll}
    -\nabla_y\cdot(\gamma(y)\nabla_y \mN_\gamma(x,y)) &=& \delta(x-y) & \mbox{for}~ y\in\Omega ,\\
	  -\gamma(y)\frac{\partial}{\partial n_y}\mN_\gamma(x,y) &=& \frac{1}{|\partial \Omega|} & \mbox{for}~ y\in\partial\Omega ,\\
	  \int_{\partial \Omega} \mN_\gamma(x,y) ds_y &=& 0.&
  \end{array}\right.
	\label{eq:def-Neumann}
\end{equation}
In the case when $\gamma=1$, the Neumann function $\mN_1(x,y)$ can be, up to a constant,  represented by \cite{book}
\begin{equation}
  \mN_1(x,y)=\left(-\frac{1}{2}\mI+\mK\right)^{-1}[\Phi(x,\cdot)](y)
\end{equation}
for $x\in\partial\Omega$.
Then, the constant admittivity in $\Omega$ can be computed using the Neumann function.
\begin{lemma}\label{lem:const}
If $\gamma$ is constant in $\Omega$, then it can be expressed as
\begin{equation}
  \gamma=\frac{I}{U_{k,l}}\left(\mN_1(p_{k^+},p_{l^+})-\mN_1(p_{k^+},p_{l^-})-\mN_1(p_{k^-},p_{l^+})+\mN_1(p_{k^-},p_{l^-})\right) .
\end{equation}
\end{lemma}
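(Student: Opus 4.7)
The plan is to derive a Green's-representation formula for the potential $u_k$ (in the constant-$\gamma$ case) in terms of $\mN_1$, evaluate it at the measurement-electrode centers $p_{l^\pm}$, and then read off $\gamma$ from the definition of $U_{k,l}$.

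First, I would observe that when $\gamma$ is a positive constant, the PDE in (\ref{eq:PEM}) simplifies to $\Delta u_k = 0$ in $\Omega$ together with the Neumann data
\begin{equation}
\frac{\partial u_k}{\partial n}(x) = \frac{I}{\gamma}\bigl(\delta(x-p_{k^+})-\delta(x-p_{k^-})\bigr) \quad \text{on } \partial\Omega,
\end{equation}
and the mean-zero condition $\int_{\partial\Omega} u_k\,ds=0$. Note that this source is compatible with the solvability condition since $\int_{\partial\Omega}\partial_n u_k\,ds=0$.

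Next, I would apply Green's second identity to $u_k$ and $y\mapsto \mN_1(x,y)$. Using $-\Delta_y \mN_1(x,\cdot) = \delta(x-\cdot)$ and the boundary condition $-\partial_{n_y}\mN_1(x,y)=1/|\partial\Omega|$ from (\ref{eq:def-Neumann}), together with the vanishing mean of $u_k$ on $\partial\Omega$, the boundary integral involving $u_k\,\partial_{n_y}\mN_1$ drops out, and only the term with $\partial_n u_k$ survives. Substituting the Dirac-delta Neumann data and using the sifting property of $\delta$ yields the pointwise representation
\begin{equation}
u_k(x) \;=\; \frac{I}{\gamma}\bigl(\mN_1(x,p_{k^+}) - \mN_1(x,p_{k^-})\bigr) \quad\text{for } x\in\Omega.
\end{equation}

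Finally, I would take $x=p_{l^+}$ and $x=p_{l^-}$ and subtract, invoking the definition (\ref{eq:U-PEM}) of $U_{k,l}$ together with the symmetry $\mN_1(x,y)=\mN_1(y,x)$ to reorganize the arguments into the form stated in the lemma; solving for $\gamma$ gives the claimed identity. The one delicate point is that $\mN_1$ has a singularity on the diagonal, so the evaluation at $p_{l^\pm}$ only makes sense when the measurement electrodes are distinct from the injection electrodes; this is ensured by the standing assumption $l^\pm\neq k^\pm$ recalled after (\ref{eq:U-CEM}). Beyond that minor check, the argument is routine and the main ingredients are all already in place.
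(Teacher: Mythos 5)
Your proposal is correct and follows essentially the same route as the paper: the Neumann-function representation $u_k(x)=\frac{I}{\gamma}\left(\mN_1(x,p_{k^+})-\mN_1(x,p_{k^-})\right)$, evaluation at $p_{l^\pm}$, and solving for $\gamma$. The paper simply states the representation formula $u_k(x)=\int_{\partial\Omega}\mN_1(x,y)\,\partial_n u_k(y)\,ds_y$ where you derive it via Green's identity, and your remarks on the symmetry of $\mN_1$ and the need for $l^\pm\neq k^\pm$ are sound but not points the paper dwells on.
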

\begin{proof}
The solution $u_k$ in (\ref{eq:PEM}) can be represented using the Neumann function by
\begin{eqnarray}
  u_k(x)
	&=& \int_{\partial\Omega}{\mN_1(x,y) \frac{\partial}{\partial n} u_k(y) ds_{y}} \label{eq:neumann}\\
	&=& \frac{I}{\gamma}(\mN_1(x,p_{k^+}) - \mN_1(x,p_{k^-})) . \nonumber
\end{eqnarray}
Then, the voltage difference $U_{k,l}=u_k(p_{l^+})-u_k(p_{l^-})$ is given by
\begin{equation}
  U_{k,l} = \frac{I}{\gamma} \left( (\mN_1(p_{k^+},p_{l^+})-\mN_1(p_{k^+},p_{l^-}))-(\mN_1(p_{k^-},p_{l^+})-\mN_1(p_{k^-},p_{l^-})) \right).
\end{equation}
Hence, we have
\begin{equation}
  \gamma = \frac{I}{U_{k,l}}\left((\mN_1(p_{k^+},p_{l^+})-\mN_1(p_{k^+},p_{l^-}))-(\mN_1(p_{k^-},p_{l^+})-\mN_1(p_{k^-},p_{l^-}))\right),
\end{equation}
which is the desired result. 
\end{proof}

\subsection{Geometric factor in the data}\label{sec:transimpedance}
In this section, the relation between the data and admittivity distribution is introduced.
We first consider the two dimensional case and assume that $\Omega$ is a disk with radius $s$ and a homogeneous admittivity $\gamma$.
\begin{lemma}\label{lem:homg-disk}
Let $p_{k^+},p_{k^-},p_{l^+},p_{l^-}$ be the positions of the electrodes on the boundary $\partial \Omega$ as shown in Figure \ref{fig:U-disk}.
If the admittivity $\gamma$ is constant in the disk $\Omega$ with radius $s$, then
\begin{equation}
  \gamma
	=
	\frac{1}{U_{k,l}}\frac{I}{\pi}\log\left(\frac{\sin(\frac{d_1+d_2+d_3}{2s})~\sin(\frac{d_2}{2s})}{\sin(\frac{d_1+d_2}{2s})~\sin(\frac{d_2+d_3}{2s})}\right),
	\label{eq:gamma=U-U}
\end{equation}
where $d_1,d_2,d_3$ are the arc lengths between adjacent $p_{k^+},p_{k^-},p_{l^+},p_{l^-}$ and $U_{k,l}$ is defined by (\ref{eq:U-PEM}).
\end{lemma}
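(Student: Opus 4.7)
The plan is to mimic the proof of Lemma \ref{lem:const}, replacing the abstract Neumann function by its explicit form on the two-dimensional disk.

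Step 1 (boundary form of $\mN_1$). By the method of images, for an interior source $x\in\Omega$ the candidate
\begin{equation*}
  \mN_1(x,y) = -\frac{1}{2\pi}\log|y-x| - \frac{1}{2\pi}\log\Bigl|y - \tfrac{s^2 x}{|x|^2}\Bigr| + c(x)
\end{equation*}
places an image at $s^2 x/|x|^2\notin\bar\Omega$, so the right-hand side is harmonic in $\Omega\setminus\{x\}$ with the correct singular behavior. A direct computation on $|y|=s$ yields $|y-s^2x/|x|^2| = (s/|x|)\,|y-x|$, and adding the outward normal derivatives of the two logarithms along $n_y$ produces $1/s$, so the Neumann condition $-\partial_{n_y}\mN_1 = 1/(2\pi s)$ holds. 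Letting $x$ approach a boundary point $p$ collapses the source and image, and fixing $c(p)$ by the normalization $\int_{\partial\Omega}\mN_1(p,\cdot)\,ds = 0$ gives
\begin{equation*}
  \mN_1(p,y) = -\frac{1}{\pi}\log|y-p| + C_0, \qquad p,y\in\partial\Omega,
\end{equation*}
with $C_0$ independent of $p$.

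Step 2 (assemble $U_{k,l}$). Following the proof of Lemma \ref{lem:const} line for line, $u_k(x) = (I/\gamma)[\mN_1(x,p_{k^+}) - \mN_1(x,p_{k^-})]$. Evaluating at $x = p_{l^\pm}$ and subtracting, the constants $C_0$ cancel to give
\begin{equation*}
  U_{k,l} = \frac{I}{\pi\gamma}\log\frac{|p_{l^+}-p_{k^-}|\,|p_{l^-}-p_{k^+}|}{|p_{l^+}-p_{k^+}|\,|p_{l^-}-p_{k^-}|}.
\end{equation*}

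Step 3 (chord-to-arc conversion). For $p,q\in\partial\Omega$ separated by arc length $d$, $|p-q| = 2s\sin(d/(2s))$. From Figure \ref{fig:U-disk}, the electrodes appear along $\partial\Omega$ in the order $p_{k^+}, p_{k^-}, p_{l^+}, p_{l^-}$ with consecutive arcs $d_1, d_2, d_3$, so the four chord-lengths above correspond to arc lengths $d_2$, $d_1+d_2+d_3$, $d_1+d_2$, $d_2+d_3$; the common prefactor $2s$ cancels, and solving for $\gamma$ yields (\ref{eq:gamma=U-U}).

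The main obstacle is Step 1, specifically pinning down the coefficient $1/\pi$ (rather than the full-space $1/(2\pi)$) that appears once the source reaches the boundary: it comes from the source and its image merging as $|x|\to s$, doubling the logarithmic coefficient. One should also justify that the formal boundary-value evaluation of $\mN_1$ is compatible with the proof of Lemma \ref{lem:const}; this is fine because the logarithmic singularity at coincident arguments drops out from $U_{k,l}$ together with the additive constant $C_0$.
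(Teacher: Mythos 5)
Your proof is correct and follows essentially the same route as the paper's: reduce to Lemma \ref{lem:const}, use the explicit form $\mN_1(x,y)=-\frac{1}{\pi}\log|x-y|+\mathrm{const}$ for boundary points of the disk, and convert chords to arc lengths via $|p-q|=2s\sin(d/(2s))$. The only difference is that you derive the boundary Neumann function by the method of images, whereas the paper simply asserts $\mN_1=2\Phi$ on $\partial\Omega$ with a citation; your Step 1 is a correct justification of that assertion.
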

\begin{figure}[htb]
  \begin{center}\begin{tabular}{cc}
  	\begin{minipage}{2.2cm}\includegraphics[height=2.4cm]{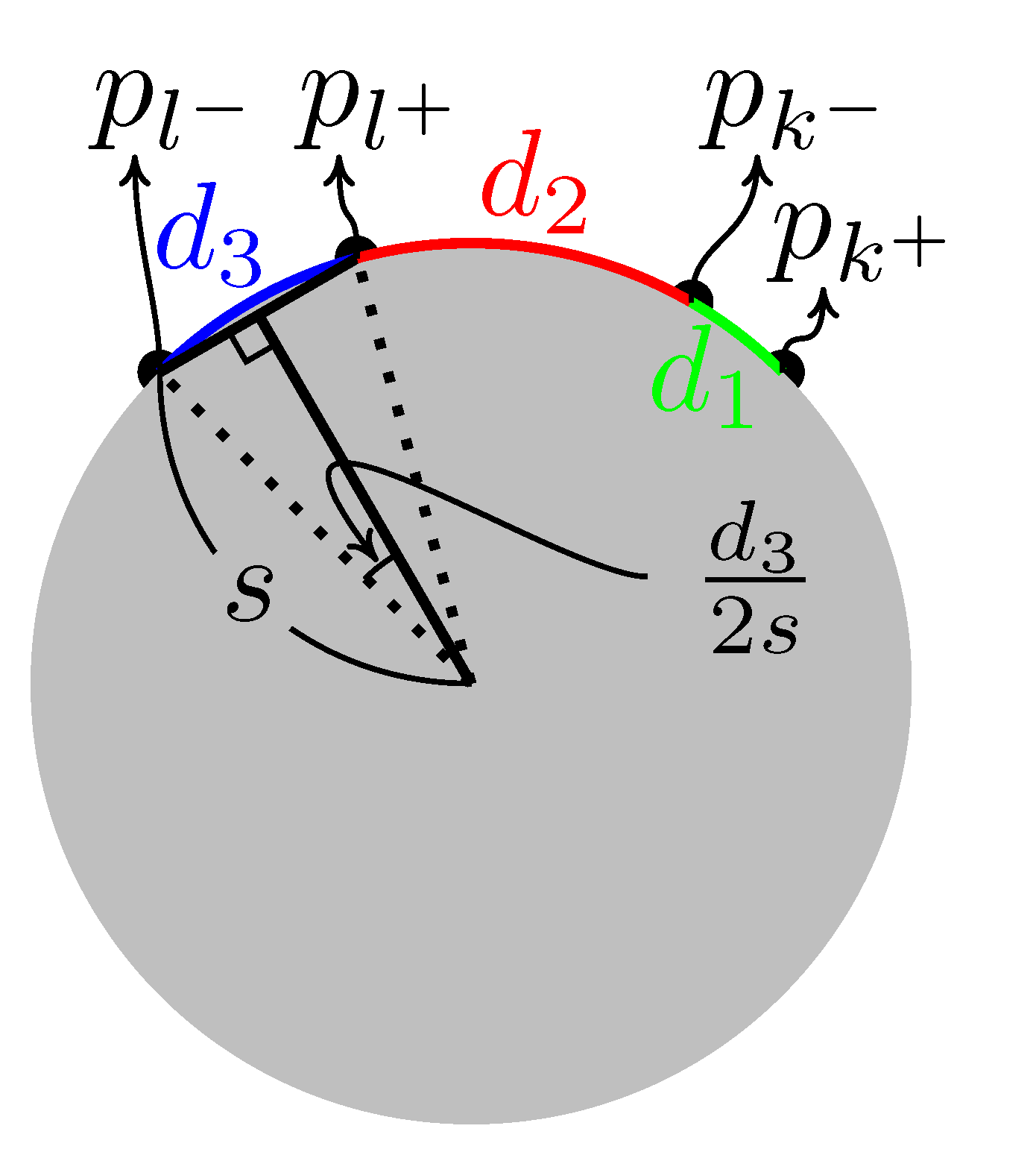}\end{minipage}
  	&
  	\begin{minipage}{9.8cm}\begin{tabular}{c|ccc}
		  ~&
	    \includegraphics[height=2.4cm]{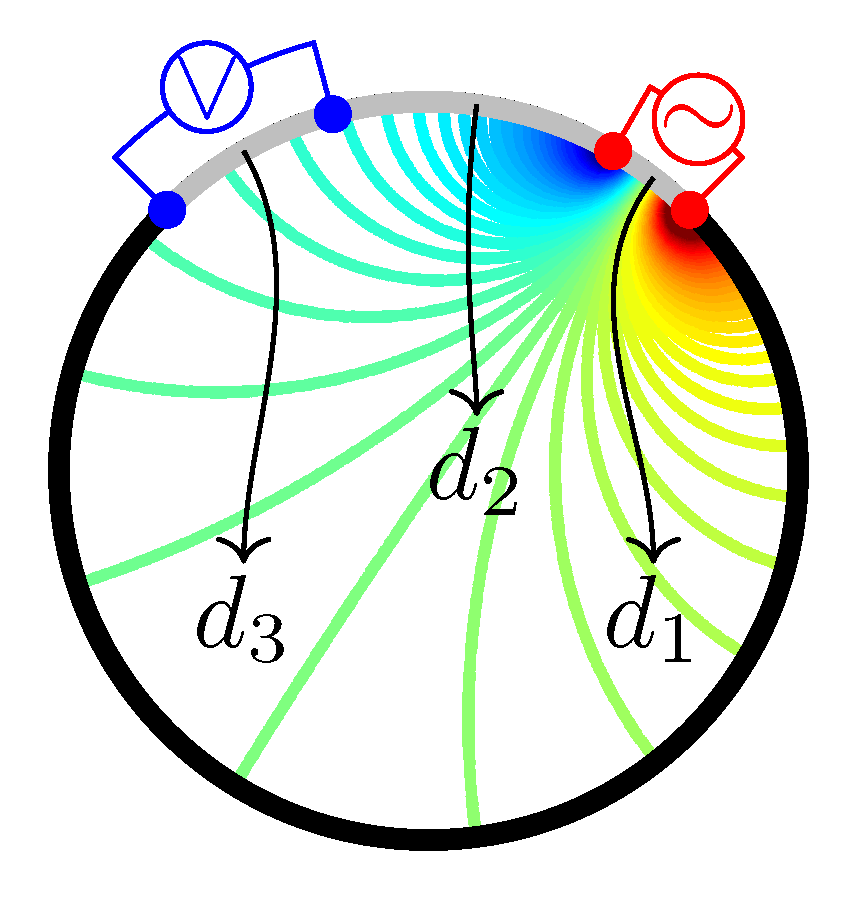} &
	    \includegraphics[height=2.4cm]{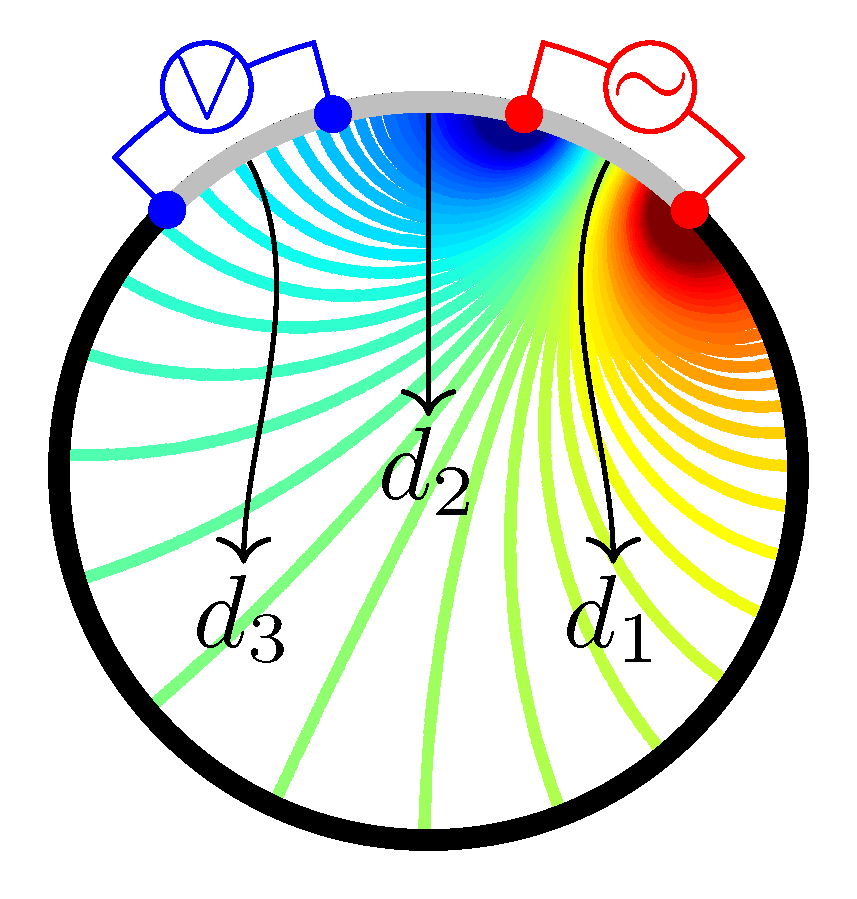} &
	    \includegraphics[height=2.4cm]{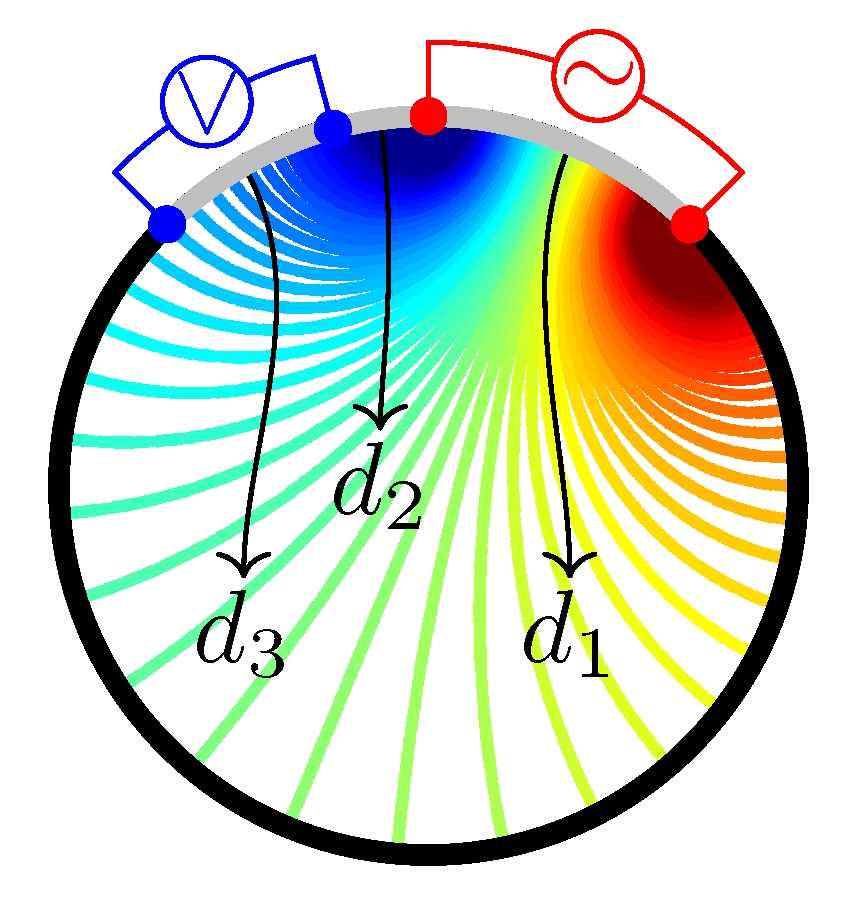} \\ \hline
		  {\scriptsize $U_{k,l}$} & 0.0125 & 0.0331 & 0.0774 \\
		  {\scriptsize $G_{k,l}$} & 3.0000 & 3.0000 & 3.0000
	  \end{tabular}\end{minipage}
	  \\
	  (a) & (b)
	\end{tabular}\end{center}
	\caption{(a) Illustration of the notation used in Lemma \ref{lem:homg-disk}: electrode positions $p_{k^+}$, $p_{k^-}$, $p_{l^+}$, $p_{l^-}$, arc length $d_1$, $d_2$, $d_3$, and radius $s$ of the disk $\Omega$. (b) For $\gamma=3$, the computed $U_{k,l}$ and $G_{k,l}$ are represented with various choices of $p_{k^+},p_{k^-},p_{l^+},p_{l^-}$.}
	\label{fig:U-disk}
\end{figure}
\begin{proof}
For $x,y\in\partial\Omega$, the Neumann function $\mN_1(x,y)=2\Phi(x,y)$, where $$\Phi(x,y) := -\frac{1}{2\pi} \log|x-y|$$ is the fundamental solution of the Laplace equation in two dimensions.  
By Lemma \ref{lem:const}, the voltage $U_{k,l}$ can be calculated by
\begin{eqnarray*}
  U_{k,l}
	&=&
	\frac{I}{\gamma} \left( (\mathcal{N}_1(p_{k^+},p_{l^+}) - \mathcal{N}_1(p_{k^+},p_{l^-})) - (\mathcal{N}_1(p_{k^-},p_{l^+}) - \mathcal{N}_1(p_{k^-},p_{l^-})) \right)
	\\
	&=&
	\frac{I}{\gamma\pi} \log\left(\frac{|p_{k^+}-p_{l^-}|~|p_{k^-}-p_{l^+}|}{|p_{k^+}-p_{l^+}|~|p_{k^-}-p_{l^-}|}\right)
	\\
	&=&
	\frac{I}{\gamma\pi} \log\left(\frac{2\sin(\frac{3d}{2s})~2\sin(\frac{d}{2s})}{2\sin(\frac{2d}{2s})~2\sin(\frac{2d}{2s})}\right).
\end{eqnarray*}
Hence, we get (\ref{eq:gamma=U-U}) as desired.
\end{proof}

To get the admittivity value $\gamma$ from the data $U_{k,l}$, we need to multiply by 
$\frac{I}{\pi}\log\left(\frac{\sin(\frac{d_1+d_2+d_3}{2s})~\sin(\frac{d_2}{2s})}{\sin(\frac{d_1+d_2}{2s})~\sin(\frac{d_2+d_3}{2s})}\right)$,
which contains a geometric factor (the radius $s$) of the domain $\Omega$.
This means that the data $U_{k,l}$ involves a geometric factor and so, in order to get the admittivity $\gamma$ we have to consider the geometry of $\Omega$ and electrode configurations.
As Figure \ref{fig:U-disk} (b) shows, the data $U_{k,l}$ is changing when the used electrode pair for injecting current are changing although $\gamma$ is kept constant ($\gamma=3$). In the 
following, a new combination of the data, $G_{k,l}$, is introduced.  $G_{k,l}$ is designed to not change over geometric factors, the geometry of $\Omega$ nor the electrode configurations.

Now we consider the arbitrary simple closed domain $\Omega$ with a homogeneous admittivity distribution $\gamma$.
The voltage difference $U_{k,l}$ and the unknown admittivity $\gamma$ to be imaged are related by
\begin{equation}
  IU_{k,l}=\int_\Omega \gamma\nabla u_k\cdot\nabla u_l ~dx .
	\label{eq:transZ}
\end{equation}
Since $\gamma$ is a constant in $\Omega$, we have
\begin{equation}
  \gamma
	~=~
	\frac{1}{IU_{k,l}}\int_\Omega \nabla v_k\cdot\nabla v_l ~dx
	~=~
	\frac{V_{k,l}}{U_{k,l}} , 
	\label{eq:V/U}
\end{equation}
where $v_k$ is the solution of (\ref{eq:PEM}) with $\gamma=1$ and $V_{k,l}:=v_k(p_{l^+})-v_k(p_{l^-})$.
Hence, the ratio between $U_{k,l}$ and $V_{k,l}$ gives the homogeneous value $\gamma$ by eliminating geometric influence.
Note that $V_{k,l}$ can be obtained by numerical simulation with known geometry information.
Now, we can define the geometry factor canceled data $G_{k,l}$, named geometry-free data, by the data ratio of {simulated} data $V_{k,l}$ and measured data $U_{k,l}$
\begin{equation}
  G_{k,l}:=\frac{V_{k,l}}{U_{k,l}} .
	\label{eq:def-V/U}
\end{equation}

For inhomogeneous $\gamma$, (\ref{eq:def-V/U}) gives some kind of average value of the admittivity in $\Omega$.
The sense of average looks similar as a weighted harmonic average with the weighting factor which depends on the position $x\in\Omega$ and the positions of the used electrodes $(p_{k^+},p_{k^-},p_{l^+},p_{l^-})$ for injecting currents and measuring voltages.
In dimension one, let $\Omega:=(0,1)$ and admittivity $\gamma(x)=\gamma_1$ for $x\in(0,a)$ and $\gamma(x)=\gamma_2$ for $x\in(a,1)$.
Then $U_{k,k}=u_k(1)-u_k(0)$ for $p_{k^+}=1$ and $p_{k^-}=0$ where $u_k$ satisfies $(\gamma u_k')'=0$ in $\Omega$ and $\gamma(x) u_k'(x)=I$ for $x\in\{0,1\}$.
By integration, we can obtain $u(x)=I\int 1/\gamma(x) dx$. Hence
\begin{equation}
  G_{k,k}=\frac{V_{k,k}}{U_{k,k}}=\frac{I\int_0^1 1 ~dx}{I\int_0^1 1/\gamma(x) ~dx}=\frac{1}{a/\gamma_1+(1-a)/\gamma_2} .
\end{equation}
Therefore, in one dimension, (\ref{eq:def-V/U}) gives the harmonic average value of inhomogeneous admittivity distribution in $\Omega$ with volume ratio as a weighting factor.
Note that there are three unknowns, $a$, $\gamma_1$, and $\gamma_2$, in (\ref{eq:def-V/U}).

We can generalize the result in one dimension to the $n$-dimensional case.
In $\mathbb{R}^n$, by using (\ref{eq:transZ}), equation (\ref{eq:def-V/U}) can be written as
\begin{equation}
  G_{k,l}=\frac{V_{k,l}}{U_{k,l}}
	=
	\frac{I\int_\Omega (\nabla v_k/I)\cdot(\nabla v_l/I) ~dx}{I\int_\Omega (1/\gamma)~(\gamma\nabla u_k/I)\cdot(\gamma\nabla u_l/I) ~dx} .
\end{equation}
Let $w_{k,l}(x)$ be the weight function defined by $$w_{k,l}(x):=(\gamma(x)\nabla u_k(x)/I)\cdot(\gamma(x)\nabla u_l(x)/I)$$ for $x\in\Omega$. Then, 
\begin{equation}
\displaystyle  G_{k,l}
	=
	\frac{\displaystyle \int_\Omega w_{k,l}(x) ~dx}{\displaystyle\int_\Omega w_{k,l}(x)/\gamma(x) ~dx}
	~\frac{\displaystyle\int_\Omega \widetilde{w}_{k,l}(x) ~dx}{\displaystyle\int_\Omega w_{k,l}(x) ~dx} , 
\end{equation}
where $\widetilde{w}_{k,l}(x):=(\nabla v_k(x)/I)\cdot(\nabla v_l(x)/I)$ for $x\in\Omega$.
For being a weighted harmonic average, one should have $\int_\Omega \widetilde{w}_{k,l}(x) ~dx=\int_\Omega w_{k,l}(x) ~dx$.
However, in general $\int_\Omega \widetilde{w}_{k,l}(x) ~dx \neq \int_\Omega w_{k,l}(x) ~dx$. Note also that $\widetilde{w}_{k,l}(x)$ can be computed numerically while $w_{k,l}(x)$ can not be because of the unknown values of $\gamma(x)\nabla u_k(x)$ and $\gamma(x)\nabla u_l(x)$ for $x\in \Omega$.

Since there could be forward modeling errors in computing $V_{k,l}$, the geometry-free data $G_{k,l}$ might be corrupted by such errors.
Therefore, we use the difference of $G_{k,l}^{-1}$ to compensate forward modeling errors in $v_k$.
We define the inverse of difference of geometry-free data $B_{k,l,k',l'}$ such that
\begin{equation}
  B_{k,l,k',l'}
	:=
	G_{k,l}^{-1}-G_{k',l'}^{-1}.
\end{equation}
As it will be shown in Theorem \ref{eq:main}, the use of $B_{k,l,k',l'}$ allows to eliminate the effect of homogeneous admittivity near the boundary.

\subsection{Decay estimation of the electric current}\label{sec:decay}
In this section, we investigate the weight function $w_{k,l}$.
For the aim of clarity, we use the point-electrode model (\ref{eq:PEM}) which is a reasonably good approximation of (\ref{eq:CEM}).
Then, by Theorem \ref{thm:CEMisPEM} the corresponding voltage difference data $\widetilde{U}^h_{k,l}$ can be approximated by $U_{k,l}$:
\begin{eqnarray*}
  U_{k,l}
	&=& u_k(p_{l^+})-u_k(p_{l^-}) \\
  &=& I((\mN_\gamma(p_{l^+},p_{k^+})-\mN_\gamma(p_{l^+},p_{k^-}))-(\mN_\gamma(p_{l^-},p_{k^+})-\mN_\gamma(p_{l^-},p_{k^-}))),
\end{eqnarray*}
where $\mN_\gamma$ is the Neumann function defined in (\ref{eq:def-Neumann}).
Moreover, the voltage difference $U_{k,l}$ and the unknown admittivity to be imaged are connected by the following relation: 
\begin{eqnarray*}
  IU_{k,l}
	&=&
	\int_\Omega \gamma \nabla u_{k}\cdot\nabla u_{l} dx
	\\
  &=&
	\int_\Omega\gamma(x)\nabla\left(\mN_\gamma(x,p_{k^+})-\mN_\gamma(x,p_{k^-})\right)\cdot\nabla\left(\mN_\gamma(p_{l^+},x)-\mN_\gamma(p_{l^-},x)\right) dx .
\end{eqnarray*}
In particular, when the admittivity is a constant $\gamma=\gamma_0$ and $\Omega$ is the half space in 
$\mathbb{R}^3$,
\begin{eqnarray*}
  U_{k,l}
  &=&
	\frac{1}{4\pi^2}\int_\Omega \gamma_0\left(\frac{x-p_{k^+}}{|x-p_{k^+}|^3}-\frac{x-p_{k^-}}{|x-p_{k^-}|^3}\right)\cdot\left(\frac{x-p_{l^+}}{|x-p_{l^+}|^3}-\frac{x-p_{l^-}}{|x-p_{l^-}|^3}\right) dx
\end{eqnarray*}
for $I=1$.
According to the decay behavior of the Neumann function \cite{Widman:1982}, there exists a positive constant $C$ such that
\begin{equation}
  \left|\nabla (\mN_\gamma(x,p_{k^+})-\mN_\gamma(x,p_{k^-}))\right| \leq C \frac{d_k}{|x-\widetilde{p}_{k}|^3}
  \quad\mbox{if}~\mbox{dist}(x,\partial\Omega)> d_k ,
\end{equation}
where $d_k=|p_{k^+}-p_{k^-}|$, $\widetilde{p}_{k}$ is a point in the line-segment joining $p_{k^+}$ and $p_{k^-}$.
Therefore,
\begin{equation}
  |\nabla u_{k} \cdot \nabla u_{l}|\leq C \frac{d_kd_l}{|x-\widetilde{p}_{k}|^3|x-\widetilde{p}_{l}|^3}
  \quad\mbox{if}~\mbox{dist}(x,\partial\Omega) > \max\{d_k,d_l\}
\end{equation}
for some positive constant $C$.
Hence, under the assumption of transversally constant admittivity, the measured voltage is very little affected by the admittivity far from the electrodes $p_{k^+},p_{k^-},p_{l^+},p_{l^-}$.

\section{Estimation of outermost region}\label{sec:sfat-homg}
In this section, we consider a simple model having a homogeneous admittivity distribution in the outermost region, {\it i.e.}, $\nabla\gamma(x)=0$ for $x\in\Omega_f$.
Under this assumption, the border between $\Omega_f$ and $\Omega_m$ will be identified.
Based on the mathematical analysis performed in section \ref{sec:main-homg-sfat}, a reconstruction algorithm is provided in section \ref{sec:recon-homg-sfat}.

\subsection{Mathematical analysis}\label{sec:main-homg-sfat}
Based on the observation in Lemma \ref{lem:const}, the constant admittivity near the boundary can be removed.
Indeed, the following theorem says that in the geometry-free data $G_{k,l}$, we can eliminate the influence of homogeneous admittivity $\gamma$ in the outermost region {by introducing a reference-like data $G_{1,2}$}.
\begin{theorem}\label{thm:main}
Let $\Omega_f$ be a subdomain in $\Omega$ satisfying $\partial\Omega\subset\overline{\Omega_f}$.
Assume that $\gamma$ is a constant $\gamma=\gamma_0$ in $\Omega_f$.
Then the data $B_{k,l,1,2}=G_{k,l}^{-1}-G_{1,2}^{-1}$ can be expressed as
\begin{equation}
  B_{k,l,1,2}
	=
  -\int_{\Omega\setminus\Omega_f}\frac{\nabla v_l\cdot\beta\nabla u_k}{V_{k,l}}-\frac{\nabla v_2\cdot\beta\nabla u_1}{V_{1,2}} ~dy,
	\label{eq:main}
\end{equation}
where $\beta(y)=\left(\gamma(y)/\gamma_0-1\right)/I$.
\end{theorem}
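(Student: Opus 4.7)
The plan is to express $G_{k,l}^{-1}=U_{k,l}/V_{k,l}$ via energy-type integrals and isolate the part that depends on the contrast $\gamma/\gamma_0-1$. The key step is to test the PDE for $u_k$ (with conductivity $\gamma$) against $v_l$ (the $\gamma=1$ potential), and vice versa. Using the delta-source boundary condition in the point electrode model (\ref{eq:PEM}) together with integration by parts, one obtains the two identities
\begin{equation*}
\int_\Omega \nabla v_l\cdot\gamma\nabla u_k\,dx \;=\; I\,V_{l,k} \;=\; I\,V_{k,l},\qquad
\int_\Omega \nabla v_l\cdot\nabla u_k\,dx \;=\; I\,U_{k,l},
\end{equation*}
where the equality $V_{l,k}=V_{k,l}$ is reciprocity for the $\gamma\equiv 1$ problem (symmetry of $\mN_1$).

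Next, I would subtract $\gamma_0^{-1}$ times the first identity from the second. Since the integrand becomes $\nabla v_l\cdot(1-\gamma/\gamma_0)\nabla u_k=-I\,\nabla v_l\cdot\beta\nabla u_k$ with $\beta(y)=(\gamma(y)/\gamma_0-1)/I$, this yields
\begin{equation*}
U_{k,l}-\gamma_0^{-1}V_{k,l} \;=\; -\int_\Omega \nabla v_l\cdot\beta\nabla u_k\,dy.
\end{equation*}
Dividing by $V_{k,l}$ gives the scalar identity $G_{k,l}^{-1}-\gamma_0^{-1}=-\int_\Omega (\nabla v_l\cdot\beta\nabla u_k)/V_{k,l}\,dy$. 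Since $\gamma\equiv\gamma_0$ in $\Omega_f$ we have $\beta\equiv 0$ there, so the integrand is supported in $\Omega\setminus\Omega_f$.

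Finally, applying the same identity with $(k,l)$ replaced by $(1,2)$ and subtracting eliminates the unknown constant $\gamma_0^{-1}$ — which is precisely the mechanism by which $G_{1,2}$ serves as reference-like data — and produces (\ref{eq:main}).

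There is no real obstacle here: the proof is essentially two integration-by-parts identities combined with the contrast algebra. The only point requiring care is the Dirac-delta boundary data in (\ref{eq:PEM}); however, because the hypothesis $\partial\Omega\subset\overline{\Omega_f}$ forces $\beta$ to vanish in a full neighborhood of the electrodes, the functions $\nabla u_k$ and $\nabla v_l$ are smooth on $\mathrm{supp}\,\beta$ and no regularization of the singular boundary sources is needed in the final integrals.
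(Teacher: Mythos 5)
Your proof is correct and follows essentially the same route as the paper's: both rest on the splitting $\nabla u_k = (\gamma/\gamma_0)\nabla u_k - I\beta\nabla u_k$, tested against $\nabla v_l/I = \nabla(\mN_1(p_{l^+},\cdot)-\mN_1(p_{l^-},\cdot))$, with the reference pair $(1,2)$ cancelling the $\gamma_0^{-1}$ term. The only cosmetic difference is that you work directly in weak form (using reciprocity $V_{l,k}=V_{k,l}$) and divide each identity by its own $V_{k,l}$, whereas the paper first writes the Neumann-function representation of $u_k$, evaluates at the electrodes, and introduces the ratio $\alpha_{k,l}=V_{k,l}/V_{1,2}$ to the same effect.
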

\begin{proof}
For the solution $u_k$ of (\ref{eq:PEM}), we have
\begin{eqnarray*}
  u_k(x)
  &=&
  \int_{\Omega}{\nabla\mN_1(x,y) \cdot \nabla u_k(y) dy} \\
  &=&
  \int_{\Omega} \nabla\mN_1(x,y) \cdot\left(\frac{1}{\gamma_0}\gamma(y)\nabla u_k(y)\right) dy
  -\int_{\Omega} \nabla\mN_1(x,y) \cdot\left(\beta(y)\nabla u_k(y)\right) dy \\
  &=&
  \frac{I}{\gamma_0}\left(\mN_1(x,p_{k^+})-\mN_1(x,p_{k^-})\right)
  -
  \int_{\Omega\setminus\Omega_f} \nabla\mN_1(x,y)\cdot\left(\beta(y)\nabla u_k(y)\right) dy.
\end{eqnarray*}
Then, the potential difference $U_{k,l}$ is given by
\begin{eqnarray*}
  U_{k,l}
	=
	\frac{I}{\gamma_0}
	\left(\left(\mN_1(p_{l^+},p_{k^+})-\mN_1(p_{l^+},p_{k^-})\right)
	-
	\left(\mN_1(p_{l^-},p_{k^+})-\mN_1(p_{l^-},p_{k^-})\right)\right)
	\\
	-
	\int_{\Omega\setminus\Omega_f} \left(\nabla \mN_1(p_{l^+},y)-\nabla \mN_1(p_{l^-},y)\right)\cdot\left(\beta(y)\nabla u_k(y)\right) dy.
\end{eqnarray*}
With the help of $$\alpha_{k,l}:=\frac{(\mN_1(p_{l^+},p_{k^+})-\mN_1(p_{l^+},p_{k^-}))-(\mN_1(p_{l^-},p_{k^+})-\mN_1(p_{l^-},p_{k^-}))}{(\mN_1(p_{2^+},p_{1^+})-\mN_1(p_{2^+},p_{1^-}))-(\mN_1(p_{2^-},p_{1^+})-\mN_1(p_{2^-},p_{1^-}))},$$ we can eliminate the first term as follows:
\begin{eqnarray}
  U_{k,l}
  -
  \alpha_{k,l}
  U_{1,2}
  &=&
  -\int_{\Omega\setminus\Omega_f}\nabla\left(\mN_1(p_{l^+},y)-\mN_1(p_{l^-},y)\right)\cdot(\beta(y)\nabla u_k(y)) \nonumber
  \\&&
  ~\quad~-\alpha_{k,l}\nabla\left(\mN_1(p_{2^+},y)-\mN_1(p_{2^-},y)\right)\cdot(\beta(y)\nabla u_1(y)) dy. 
  \label{eq:UformulaD}
\end{eqnarray}
Since $\mN_1(p_{l^+},y)-\mN_1(p_{l^-},y)=v_l(y)/I$, $\alpha_{k,l}=V_{k,l}/V_{1,2}$ and (\ref{eq:main}) follows by dividing (\ref{eq:UformulaD}) by $V_{1,2}$.
\end{proof}

Theorem \ref{thm:main} is established for the data $U_{k,l}$ using the point electrode model (\ref{eq:PEM}) while the measurable data $\widetilde{U}^h_{k,l}$ follows from the complete electrode model (\ref{eq:CEM}).
In order to apply Theorem \ref{thm:main} to the measured data $\widetilde{U}^h_{k,l}$, we need Theorem \ref{thm:CEMisPEM}.
Since we do not use the data where we inject currents, Theorem \ref{thm:CEMisPEM} can be applied to the measured data $\widetilde{U}^h_{k,l}$ with $l^+ \neq k^\pm$ and $l^- \neq k^\pm$ so that we can introduce a similar argument as the one in Theorem \ref{thm:main}.
By the triangle inequality,
\begin{equation}
  \|\widetilde{u}^h_k-u_k\| \le \|(\widetilde{u}^h_k-\hat{u}^h_k) 
  - (u_k-\hat{v}_k)\|+\|\hat{u}^h_k-\hat{v}_k\|, 
\end{equation}
where $\hat{u}^h_k$ and $\hat{v}_k$ are the solution of (\ref{eq:CEM}) and (\ref{eq:PEM}) with $\gamma=1$ respectively.
By \cite{Hanke:2011} and Theorem \ref{thm:CEMisPEM}, we have
\begin{equation}
  \|\widetilde{u}^h_k-u_k\| \le Ch .
\end{equation}
Hence, we can apply Theorem \ref{thm:main} to measured data $\widetilde{U}^h_{k,l}$ and $\widetilde{u}^h_k$ to obtain
\begin{eqnarray}
  \widetilde{U}^h_{k,l}
  -
  \alpha_{k,l}
  \widetilde{U}^h_{1,2}
  &=&
  -\int_{\Omega\setminus\Omega_f}\nabla\left(\mN(p_{l^+},y)-\mN(p_{l^-},y)\right)\cdot(\beta(y)\nabla \widetilde{u}^h_k(y)) \nonumber
  \\&&
  ~~-\alpha_{k,l}\nabla\left(\mN(p_{2^+},y)-\mN(p_{2^-},y)\right)\cdot(\beta(y)\nabla \widetilde{u}^h_1(y)) dy \nonumber
  \\&&
  +o(h) . \label{eq:main2}
\end{eqnarray}
Expansion (\ref{eq:main2}) shows that, up to $o(h)$, only points in $\Omega \setminus \Omega_f$ contribute to the weighted measurements $\widetilde{U}^h_{k,l}-\alpha_{k,l}\widetilde{U}^h_{1,2}$. It plays a key role in developing an efficient algorithm for reconstructing the internal boundary of $\Omega_f$ from current-voltage boundary measurements without any reference data.

\subsection{Reconstruction method}\label{sec:recon-homg-sfat}
In this section, we introduce an outermost-region subtraction method for identifying the inhomogeneous border near boundary based on Theorem \ref{thm:main}.
Specifically, the inhomogeneous border near the $n$-th electrode will be considered.
Then we define the set $\mP_n$ by collecting all possible 4-component combinations (unlike permutations) from $\mZ_n:=\{n-3,n-2,\cdots,n+4\}$ such that the order of selection of the first 2-components and the last 2-components does not matter:
\begin{equation}
  \mP_n
	:=
	\{(i,j,k,l)~:~ i<j,~ k<l,~ i<k,~j\neq l,~~ i,j,k,l\in\mZ_n\}.
	\label{eq:mPn}
\end{equation}
For the data $U^h_{1,2}$, we define $1^+=n,~1^-=n+1,~2^+=n-1,~2^-=n+2$ as shown in Figure \ref{fig:mPs}.
\begin{figure}[htb!]
  \centering \includegraphics[width=10cm]{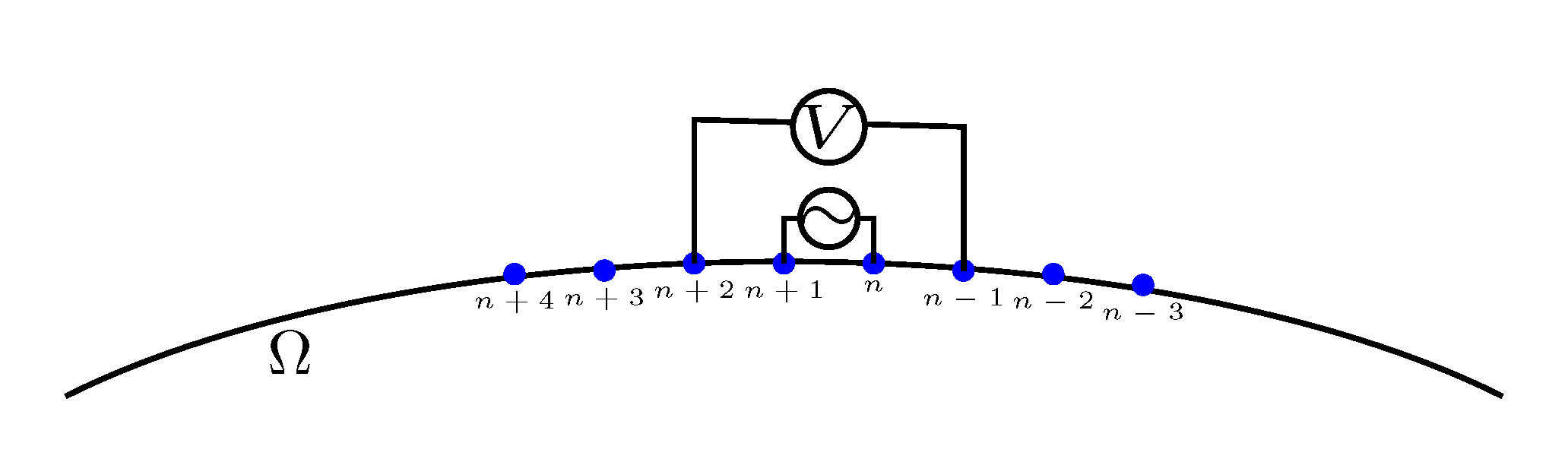}
	\caption{Illustration of choosing the data $U^h_{1,2}$ when we are interested in the inhomogeneous border near $n$-th electrode.}
	\label{fig:mPs}
\end{figure}

We denote the data vector as $\bfb:=\left[B_{k,l,1,2}\right]_{(k^+,k^-,l^+,l^-)\in\mP_n}$ for reconstruction and assume that $\gamma \approx \gamma_0$ to write that $u_k \approx (1/\gamma_0) v_k$.
Note that the data $\bfb$ is rarely affected by the admittivity distribution far from $p_{k^\pm}$, $p_{l^\pm}$ as explained in section \ref{sec:decay}.
Hence, we restrict our reconstruction domain to $D\subset\Omega$ near the electrodes $\{\mE_{n-3},\mE_{n-2},\cdots,\mE_{n+3},\mE_{n+4}\}$.
Figure \ref{fig:recon-mesh} shows an example of $D$ with discretization near eight electrodes $\{\mE_1,\mE_2,\mE_3,\mE_4,\mE_5,\mE_6,\mE_7,\mE_8\}$ for $n=4$.
\begin{figure}[htb!]
	\centering \includegraphics[width=8cm]{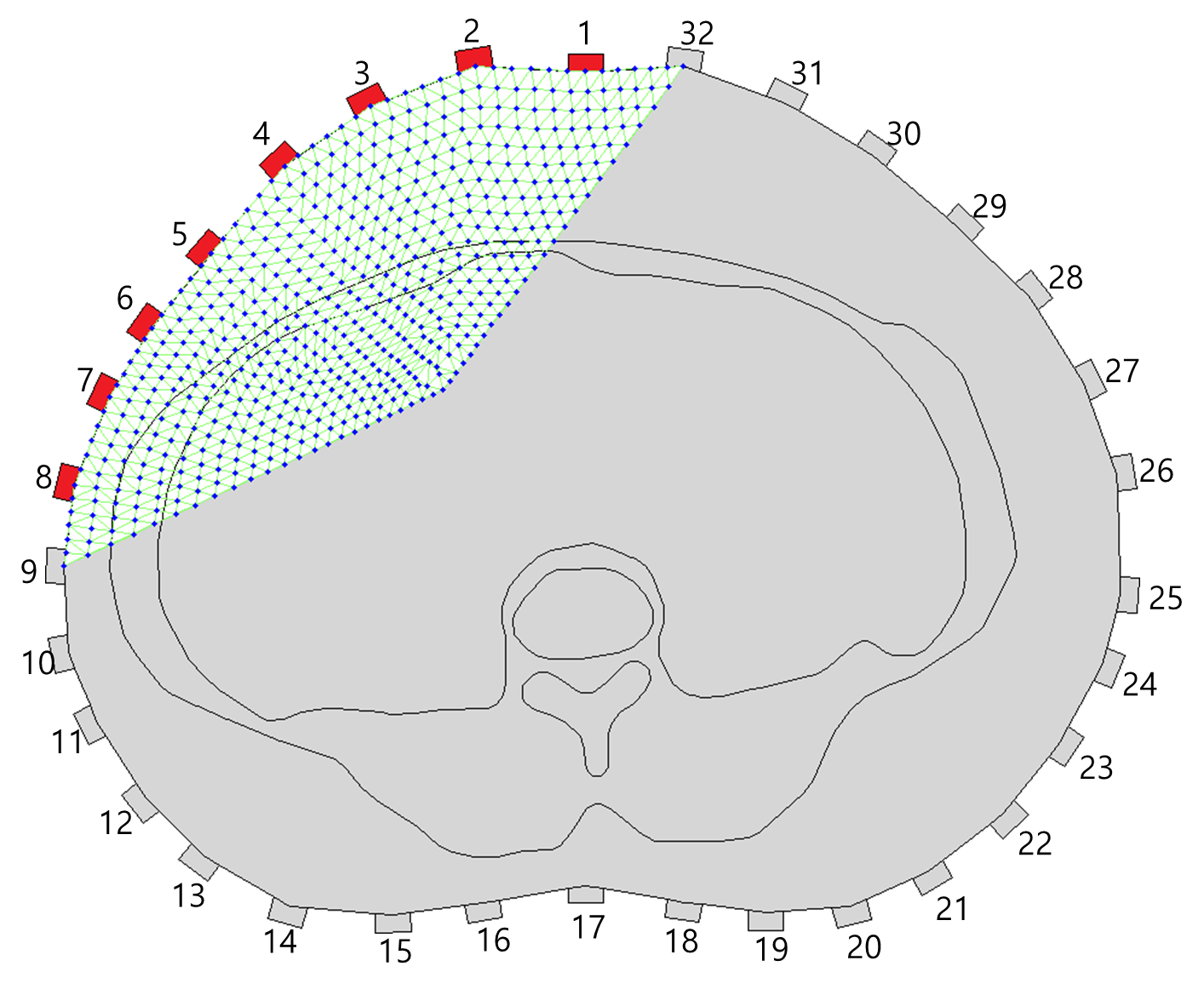}
	\caption{Illustration of $D$ with discretization of using eight electrodes $\{\mE_1,\mE_2,\mE_3,\mE_4,\mE_5,\mE_6,\mE_7,\mE_8\}$}
	\label{fig:recon-mesh}
\end{figure}

We discretize the imaging domain $D$ into $N_T$ elements $D=\cup_{m=1}^{N_T}D_m$ with assuming that the admittivity $\gamma$ is isotropic and constant on each $D_m$ for $m=1,2,\cdots,N_T$.
Let $\kappa$ be the vector $\kappa=[\kappa_1,~\kappa_2,~\cdots,\kappa_{N_T}]$ with
\begin{equation}
  \kappa_m := -\frac{1}{I}\left(\frac{\left.\gamma\right|_{D_m}}{\gamma_0^2}-\frac{1}{\gamma_0}\right)
\end{equation}
for $m=1,2,\cdots,N_T$.
Then we linearize (\ref{eq:main}). By using $u_k \approx (1/\gamma_0) v_k$ in $D_m$,  the following linear system for outermost-region subtracted image $\kappa$ can be derived:
\begin{equation}
  \Bbb S \kappa=\bfb,
	\label{eq:Ax=b}
\end{equation}
where the sensitivity matrix $\Bbb S$ is defined by
\begin{equation}
  \Bbb S
	=
	\left[\begin{array}{c}
	  \vdots
		\\
  	~ \cdots ~
		\int_{D_m} \frac{\nabla v_k \cdot \nabla v_l}{V_{k,l}} - \frac{\nabla v_1 \cdot \nabla v_2}{V_{1,2}} dx
		~ \cdots ~
		\\
		\vdots
	\end{array}\right].
\end{equation}

We solve (\ref{eq:Ax=b}) using a Tikhonov regularization method as follows:
\begin{equation}
  \kappa=\left(\mathbb{S}^T\mathbb{S}+\alpha\bfI\right)^{-1}\mathbb{S}^T\bfb ,
\end{equation}
where the superscript $T$ denotes the transpose, $\alpha$ is a regularization parameter, and $\bfI$ is the identity matrix.

\section{Numerical experiments}\label{sec:numerical}
We conduct numerical simulations to test the proposed least-squares algorithm.
The boundary shape $\partial\Omega$ and electrode positions $p_1,p_2,\cdots,p_{32}$ are obtained by using 3D scanner as shown in Figure \ref{fig:fwd-humanbody} (a).
To generate the data $U_{k,l}$, we use finite element method (FEM) with mesh in the scanned domain $\Omega$.
To make internal admittivity distribution for fat, muscle, bone, and inter organs, we use a CT image and for the admittivity values of biological tissues, we use the same values as in \cite{Hasgall:2015}; see Figure \ref{fig:fwd-humanbody} (b).
\begin{figure}[h]
  \begin{center}\begin{tabular}{cc}
	  \begin{minipage}{3.8cm}\begin{tabular}{c}
  		\begin{minipage}{3.3cm}\includegraphics[width=3.3cm]{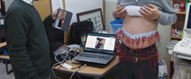}\end{minipage} \\
	  	\begin{minipage}{3.3cm}\includegraphics[width=3.3cm]{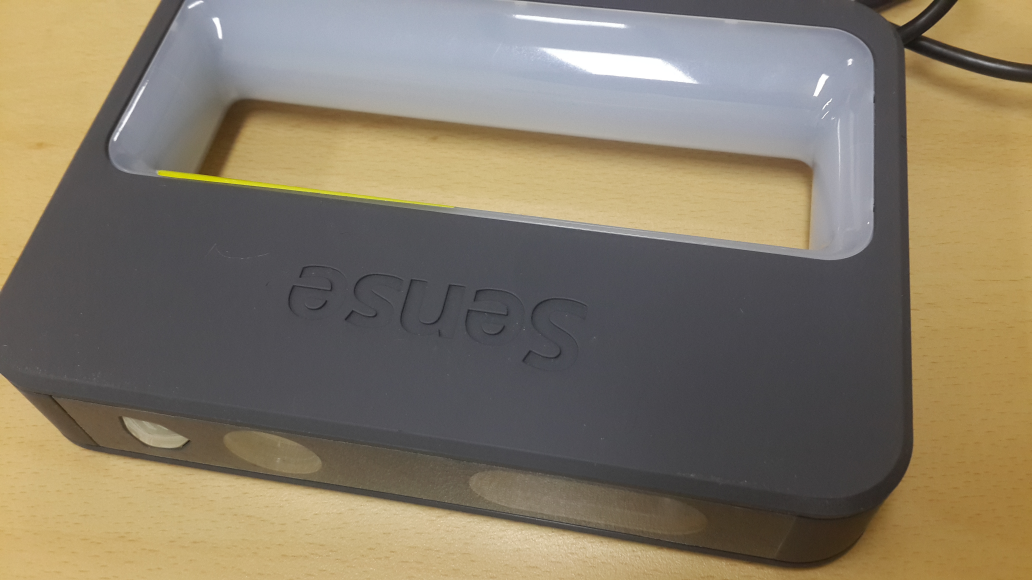}\end{minipage}
		\end{tabular}\end{minipage} &
	  \begin{minipage}{3.5cm}\includegraphics[width=3.5cm]{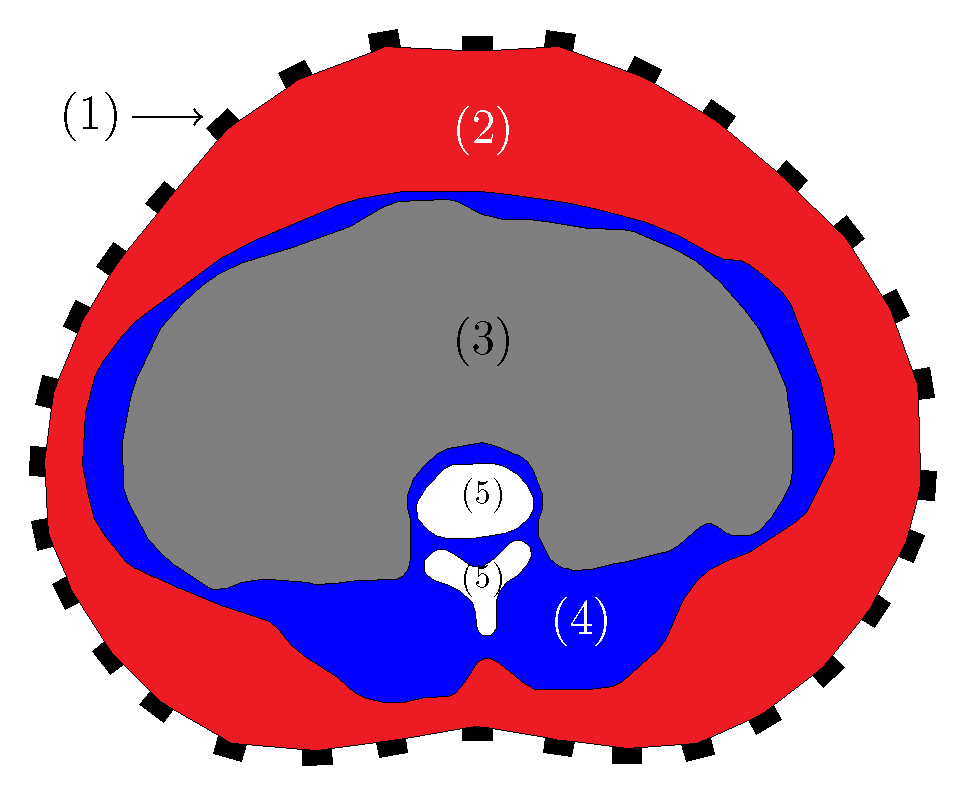}\end{minipage} ~~
	  \begin{minipage}{4cm}\tiny\begin{tabular}{l}
  		(1) Electrode: 1 \\~\\
		  (2) Subcutaneous Fat: 1/15 \\~\\
		  (3) Internal: (1/15+1/0.65)/2 \\~\\
		  (4) Muscle: 1/3 \\~\\
		  (5) Bone: 1/150 \\~\\
		  unit is [S/m]
		\end{tabular}\end{minipage} \\
		(a) & (b)
	\end{tabular}\end{center}
	\caption{(a) shows 3D scanner instrument and its usage, (b) shows used forward model and conductivity values to generate data}
	\label{fig:fwd-humanbody}
\end{figure}
As shown in Figure \ref{fig:recon-mesh} for numerical tests, we use the inject-measure set in $\mP_4$ by choosing eight electrodes depicted in red and generating the corresponding triangular mesh.

For computing geometry-free data $G_{k,l}:=V_{k,l}/U_{k,l}$, the $V_{k,l}$ is essential which can be obtained numerically by using the FEM with homogeneous admittivity $\gamma=1$ in $\Omega$.
To give conditions similar to real situations, the FEM uses optimally generated meshes corresponds to each admittivity distributions for $U_{k,l}$ and $V_{k,l}$.

We tested all inject-measure index set $\mP_1,\mP_2,\cdots,\mP_{32}$ with corresponding triangular mesh in the reconstructed region $D$ corresponds to the choice of $\mP_n$.
The image reconstruction result of applying the method using $\mP_4$ index set with 1063 triangular mesh (Figure \ref{fig:recon-mesh}) is represented in Figure \ref{fig:recon-humanbody-4methods}.  
\begin{figure}[htb]
  \begin{center}
	  \includegraphics[width=5cm]{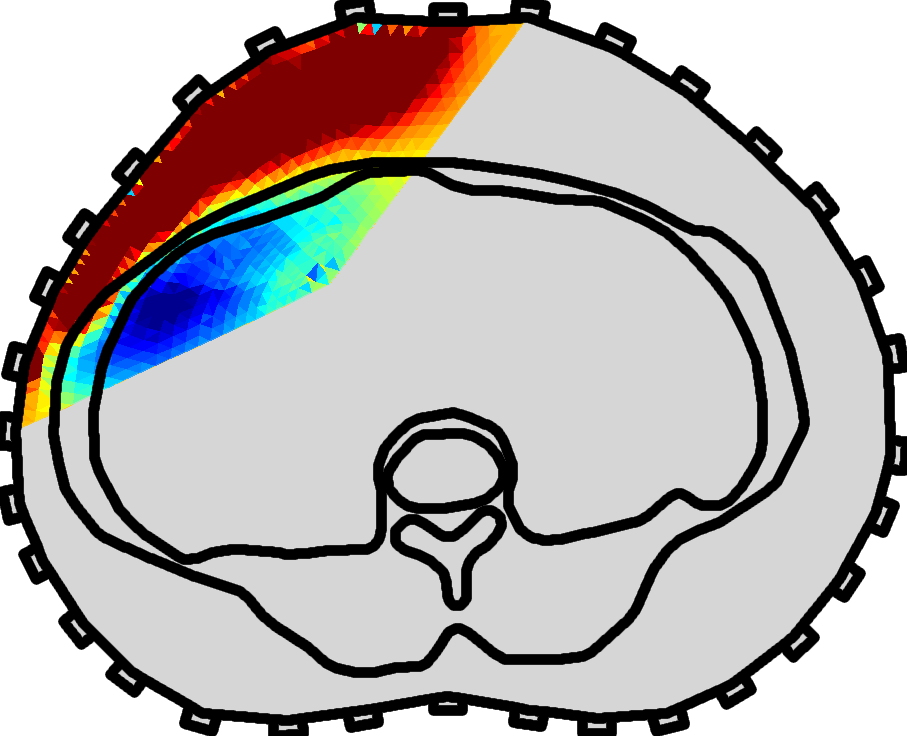}
	\end{center}
	\caption{Reconstructed image of applying the method using $\mP_4$ index set.}
	\label{fig:recon-humanbody-4methods}
\end{figure}

Now, we add Gaussian random noise with SNR 15dB to the data $U_{k,l}$.
The reconstruction results using index sets $\mP_1,\mP_2,\cdots,\mP_{32}$ of applying the linearized method with Tikhonov regularization using the noisy data are represented in Figure \ref{fig:recon-humanbody-linearized-32}.
For visual comfort, we merge the 32 result images to 1 image as shown in Figure \ref{fig:recon-humanbody-linearized-32}.
\begin{figure}[htb]
	\begin{center}
  	\includegraphics[width=13cm]{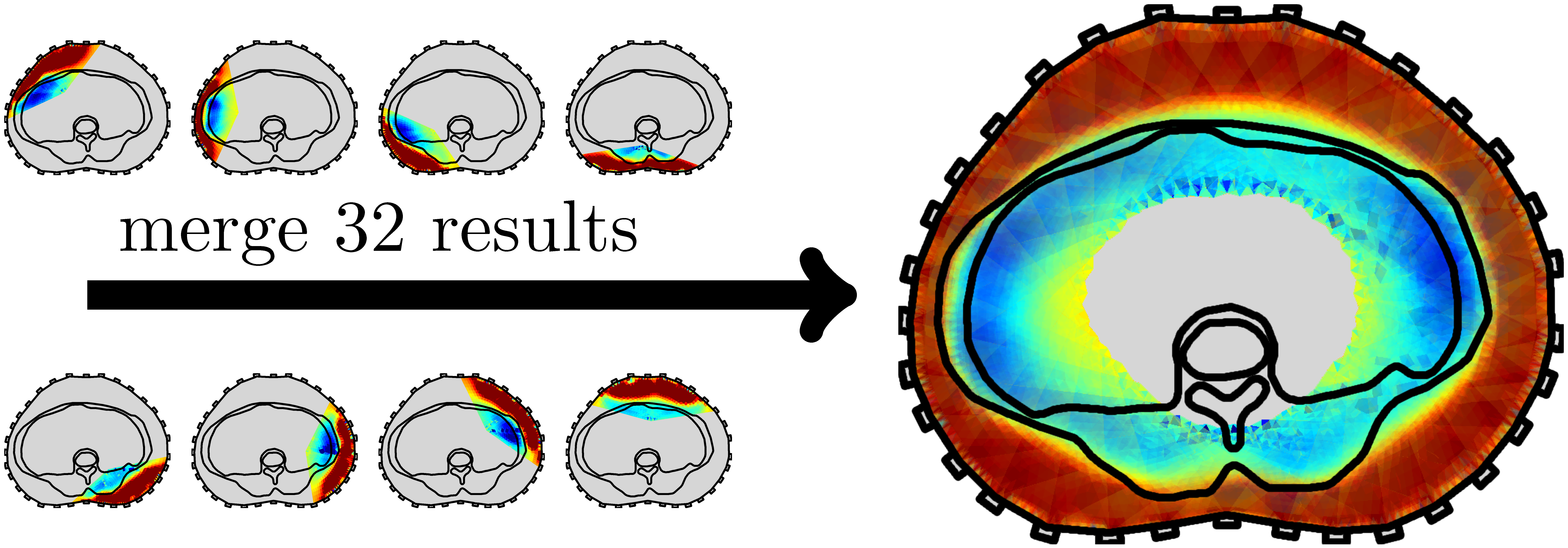}
	\end{center}
	\caption{Numerical results of applying linearized method for each $\mP_1,\mP_2,\cdots,\mP_{32}$ with noisy data.}
	\label{fig:recon-humanbody-linearized-32}
\end{figure}

It is worth pointing out that the column vectors of $\mathbb{S}$ are highly correlated.
The correlation function $c_{i,j}$ between the column vectors can be defined by
\begin{equation}
  c_{i,j}:=\frac{\bfs_i\cdot\bfs_j}{\|\bfs_i\|\|\bfs_j\|}
\end{equation}
for $i,j=1,2,\cdots,N_T$, where $\bfs_i$, $\bfs_j$ are column vectors of $\mathbb{S}$.
We compute the correlation $c_{1,j}$, $c_{2,j}$, $c_{3,j}$ for $j=1,2,\cdots,N_T$ which correspond to the mesh elements $D_1$ near the boundary, $D_2$ in the middle of $\Omega_f$, and $D_3$ far from the boundary $\partial \Omega$.
As shown in Figure \ref{fig:correlation-Smat}, the column vectors are highly correlated.  
\begin{figure}[htb]
  \begin{center}\begin{tabular}{cc}
	  \begin{minipage}{4.2cm}\includegraphics[width=4.2cm]{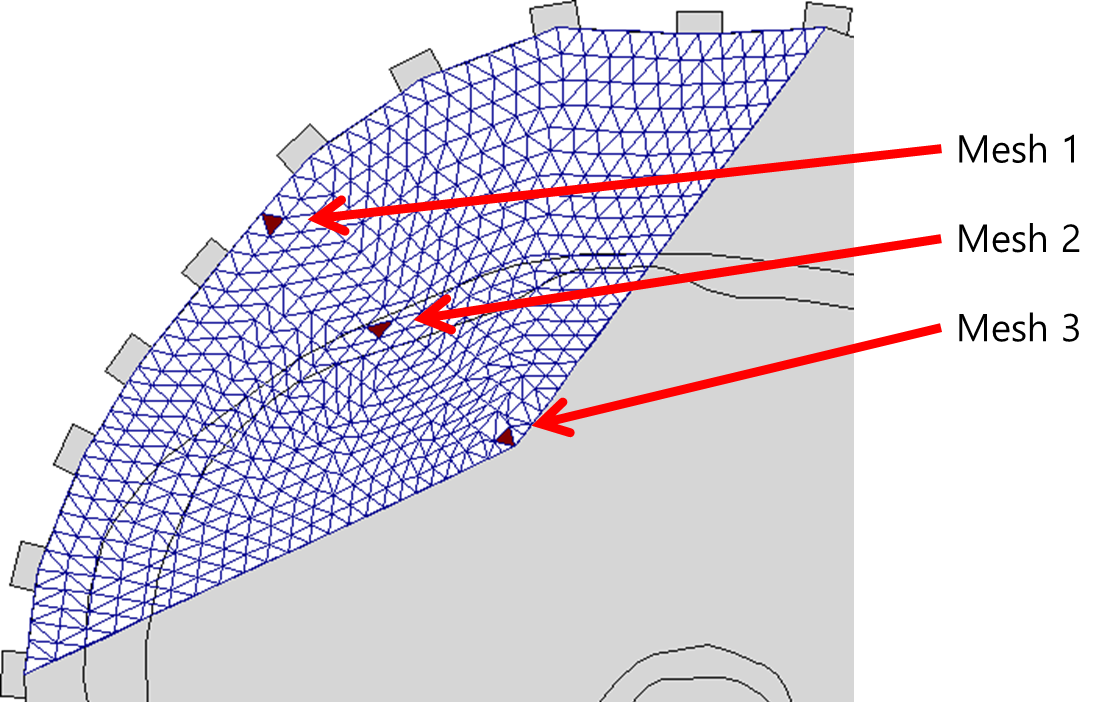}\end{minipage} &
	  \begin{minipage}{4.2cm}\includegraphics[width=4.2cm]{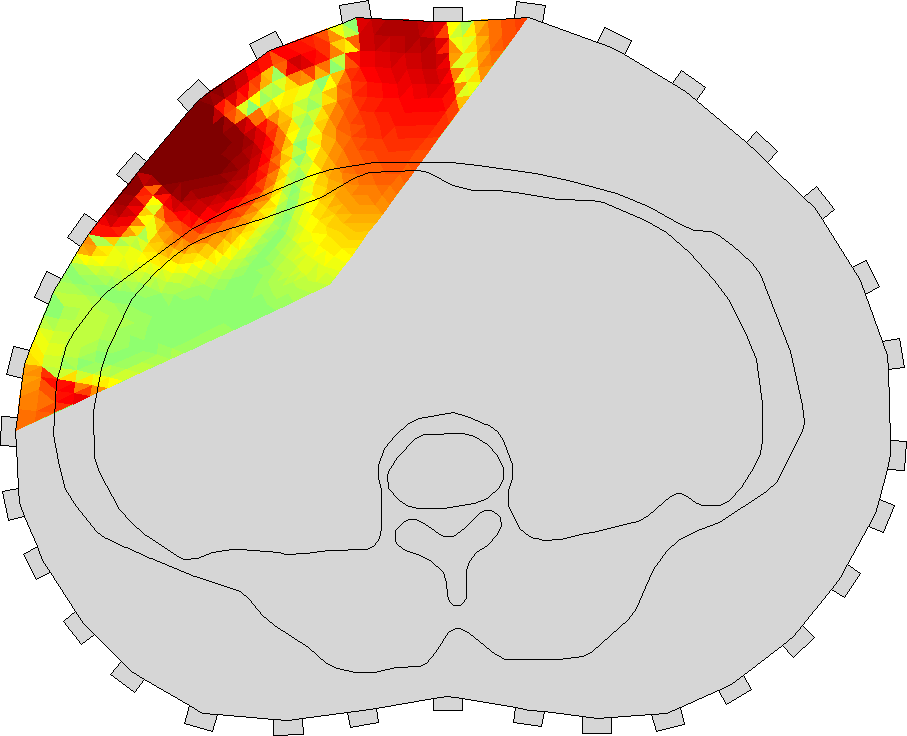}\end{minipage}
		\begin{minipage}{0.7cm}\includegraphics[width=0.7cm]{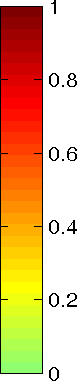}\end{minipage} \\
		(a) & (b) \\ ~ \\
	  \begin{minipage}{4.2cm}\includegraphics[width=4.2cm]{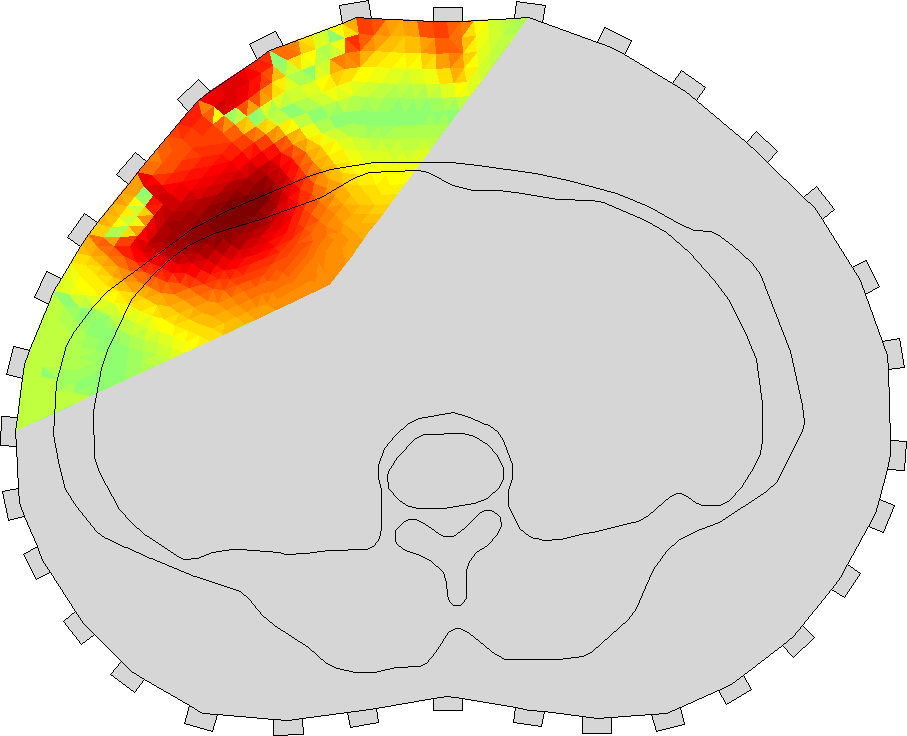}\end{minipage}
		\begin{minipage}{0.7cm}\includegraphics[width=0.7cm]{ColorBar}\end{minipage} &
	  \begin{minipage}{4.2cm}\includegraphics[width=4.2cm]{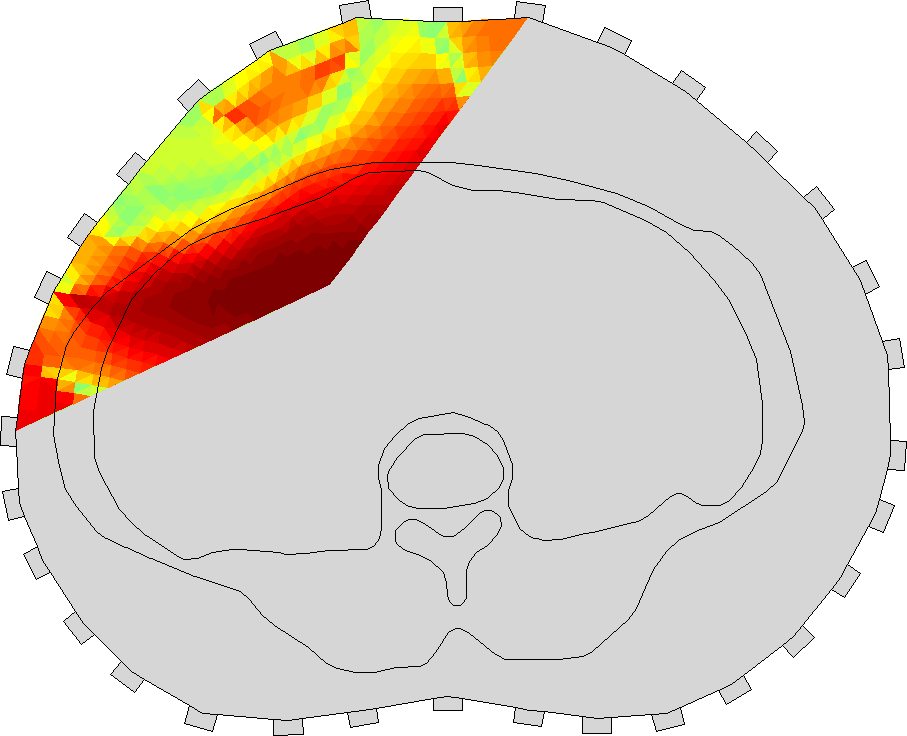}\end{minipage}
		\begin{minipage}{0.7cm}\includegraphics[width=0.7cm]{ColorBar}\end{minipage} \\
		(c) & (d)
	\end{tabular}\end{center}
	\caption{(a) represents chosen mesh elements for computing correlations. (b), (c), and (d) show correlation distributions $c_{1,j}$, $c_{2,j}$, $c_{3,j}$ with mesh 1, 2, and 3, respectively.}
	\label{fig:correlation-Smat}
\end{figure}

The success of the proposed least-squares method arises from taking advantage of the Tikhonov regularization.
The Tikhonov regularization minimizes both $|\mathbb{S}\kappa-\bfb|^2$ and $|\kappa|^2$ as follows:
$$
  \kappa=\left(\mathbb{S}^T\mathbb{S} +\alpha\bfI\right)^{-1}\mathbb{S}^T\bfb
	\quad\Leftrightarrow\quad
	\kappa=\mbox{\rm arg}\min_\kappa|\mathbb{S}\kappa-\bfb|^2+\alpha|\kappa|^2 .
$$
The component of $\kappa=[\kappa_1,\kappa_2,\cdots,\kappa_{N_T}]^T$ can be considered as a coefficient of the linear combination of column vectors $\bfs_n$ to generate $\bfb$.
When minimizing $|\mathbb{S}\kappa-\bfb|^2$, high correlation of $\bfs_n$ causes uncertainty in finding $\kappa$. 
However, minimizing $|\kappa|^2$ can compensate the mismatch of $\kappa_n$ caused by high correlations between the $\bfs_n$ for $n=1,2,\cdots,N_T$, since $\kappa_n$ and $\kappa_m$ tend to have a similar value when $\bfs_n$ and $\bfs_m$ are highly correlated.

It is also worth emphasizing that other imaging algorithms such as MUSIC \cite{hanke,hanke2,Cheney:2001} can not perform well because precisely of the high correlation of column vectors of the sensitivity matrix $\mathbb{S}$.

\section{Conclusions}
In this work, static EIT image reconstruction algorithm of human abdomen for identifying subcutaneous fat region is developed.
The proposed depth-based reconstruction method relies on Theorem \ref{thm:main} which shows that subcutaneous fat influence in the data can be eliminated by using a reference-like data and geometry information (domain shape and electrode configuration) to overcome a fundamental drawback in static EIT; lack of reference data for handling the forward modeling errors.
We suggest a linearized method with Tikhonov regularization which uses the subcutaneous influence eliminated data with a specially chosen current pattern.
Numerical simulations show that the reconstruction result of identifying the subcutaneous fat region is quite satisfactory.
The suggested way of eliminating influence of homogeneous background admittivity can be applied in other static EIT area, for instance, ground detection.
The knowledge of subcutaneous fat region can be a useful information of developing an algorithm of estimating visceral fat occupation. For clinical use,  estimating visceral fat occupation is required to provide useful information in abdominal obesity.

\appendix
\section{Complete electrode model} \label{sec:appendix}
The following result holds.
\begin{lemma}
Let $\Omega$ be a lower half-space of $\mathbb{R}^3$.
Let $\mE^h_+$ and $\mE^h_-$ be circular electrodes centered at points $p_+$ and $p_-$ on $\partial\Omega$, respectively, with radius $h>0$.
Let {$\widetilde{u}^h\in H^1_{loc}(\Omega)$} satisfy
\begin{equation}
  \left\{\begin{array}{l}
    \Delta \widetilde{u}^h = 0 \quad \text{in}~ \Omega , \\
    \left.\widetilde{u}^h+z_\pm\frac{\partial \widetilde{u}^h}{\partial n}\right|_{\mE^h_\pm} = \widetilde{P}^h_\pm ,\\
	\int_{\mE^h_+}\frac{\partial \widetilde{u}^h}{\partial n} ~ds = 1 =-\int_{\mE^h_-}\frac{\partial \widetilde{u}^h}{\partial n} ~ds ,\\
	\frac{\partial \widetilde{u}^h}{\partial n} = 0 \quad \mbox{on}~\partial\Omega\setminus\left(\mE^h_+\cup\mE^h_-\right) ,\\
    \lim_{|x|\rightarrow\infty}\widetilde{u}^h(x)=0, 
  \end{array}\right.
  \label{eq:CEMu}
\end{equation}
where $z_\pm$ and $\widetilde{P}^h_\pm$ are constants.
Define $v(x):=\left(\Phi(x,p_+)-\Phi(x,p_-)\right)$.
For given $R>0$, there exists a constant $C$ such that, for $2h<R$, 
\begin{equation}
  \|\widetilde{u}^h - v\|_{H^1(\Xi_R)} \le C h ,
\end{equation}
where $\Xi_R := \{x \in \Omega ~|~ |x-p_+|>R,~ |x-p_-|>R\}$.
\label{lem:CEMisPEM}
\end{lemma}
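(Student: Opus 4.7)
The plan is to represent $\widetilde{u}^h$ through its Neumann trace on $\partial\Omega$ via the half-space Neumann Green's function and then Taylor-expand the kernel around the electrode centers $p_\pm$; this expansion is legitimate on $\Xi_R$ because of the separation $2h < R$. Let $g^h := \partial\widetilde{u}^h/\partial n$, which is supported in $\mE^h_+ \cup \mE^h_-$ and satisfies $\int_{\mE^h_\pm} g^h\,ds = \pm 1$, and let $\mathcal{N}$ denote the Neumann Green's function of the lower half-space; on $\partial\Omega$ this is a multiple of $\Phi$ by the image principle. Standard Somersalo-Cheney-Isaacson existence theory \cite{Somersalo:1992} combined with the decay condition at infinity gives the representation
\[
  \widetilde{u}^h(x) = \int_{\mE^h_+ \cup \mE^h_-} \mathcal{N}(x,y)\, g^h(y)\, ds_y, \qquad x \in \Omega,
\]
and, with the normalization adopted in the statement, the principal part $\mathcal{N}(x,p_+) - \mathcal{N}(x,p_-)$ coincides with $v(x)$.

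Next, for $x \in \Xi_R$ and $y \in \mE^h_\pm$ the separation $|x - p_\pm| > R > 2h \ge 2|y - p_\pm|$ makes $\mathcal{N}(x,\cdot)$ and $\nabla_x \mathcal{N}(x,\cdot)$ smooth on $\mE^h_\pm$ with bounds depending only on $R$. A first-order Taylor expansion around $p_\pm$ yields
\[
  |\mathcal{N}(x,y) - \mathcal{N}(x,p_\pm)| + |\nabla_x \mathcal{N}(x,y) - \nabla_x \mathcal{N}(x,p_\pm)| \le C(R)\, h,
\]
and substituting into the representation, together with $\int_{\mE^h_\pm} g^h = \pm 1$, gives $\widetilde{u}^h(x) = v(x) + r^h(x)$ with remainder controlled by
\[
  |r^h(x)| + |\nabla r^h(x)| \le C(R)\, h\,\bigl(\|g^h\|_{L^1(\mE^h_+)} + \|g^h\|_{L^1(\mE^h_-)}\bigr),
\]
uniformly for $x \in \Xi_R$. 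Integrating over $\Xi_R$ then delivers the $H^1$ estimate as soon as the two $L^1$-norms of $g^h$ are under control.

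The main obstacle is exactly this uniform-in-$h$ control of $\|g^h\|_{L^1(\mE^h_\pm)}$: only the signed total mass $\pm 1$ is given for free, while sign cancellations inside an electrode could in principle inflate the $L^1$-norm. To rule this out I would combine the Robin identity $g^h = (\widetilde{P}^h_\pm - \widetilde{u}^h)/z_\pm$ on $\mE^h_\pm$ with the maximum principle and Hopf lemma applied to the harmonic function $\widetilde{u}^h$ (which attains its maximum on $\mE^h_+$ and its minimum on $\mE^h_-$, forcing $\partial\widetilde{u}^h/\partial n$ to have the expected sign at the extremal points) and with the natural variational energy estimate associated with (\ref{eq:CEMu}), in the spirit of \cite{Hanke:2011}. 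Together these show that $\widetilde{u}^h - \widetilde{P}^h_\pm$ keeps a single sign on $\mE^h_\pm$ up to an $O(h)$ correction, so that $\|g^h\|_{L^1(\mE^h_\pm)} = 1 + O(h)$. Plugging this back into the pointwise Taylor estimate above delivers $\|\widetilde{u}^h - v\|_{H^1(\Xi_R)} \le Ch$, as claimed.
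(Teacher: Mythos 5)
Your argument follows essentially the same route as the paper's: represent the potential as a layer potential over the electrodes, Taylor-expand the kernel about the centers $p_\pm$ using the separation $|x-p_\pm|>R>2h$ valid on $\Xi_R$, and thereby reduce the $H^1$ estimate to an $L^1$ bound on a current density supported on $\mE^h_\pm$. The one structural difference is that the paper inserts an intermediate potential $v^h$ carrying a \emph{uniform} current density proportional to $\chi_{\mE^h_\pm}/h^2$: the comparison $v^h-v$ then involves a known density and is purely geometric, while the comparison $\widetilde{u}^h-v^h$ involves the mean-zero discrepancy $\eta^h_\pm=\frac{\partial \widetilde{u}^h}{\partial n}-\frac{\partial v^h}{\partial n}$, whose vanishing integral is what licenses subtracting the kernel value at $p_\pm$. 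You skip the intermediate step and expand $\widetilde{u}^h$ directly against $v$, using the flux normalization $\int_{\mE^h_\pm}g^h\,ds=\pm1$ to extract the principal part; this is equivalent in substance and slightly more economical. Both routes terminate at the same loose end: the paper's final displayed inequality still carries the factor $\|\eta^h_+\|_{L^1(\mE^h_+)}+\|\eta^h_-\|_{L^1(\mE^h_-)}$, which the appendix never bounds, and this is precisely the quantity you flag as the main obstacle for $\|g^h\|_{L^1(\mE^h_\pm)}$. Your proposed remedy (single-signedness of the electrode current via the Robin condition, maximum principle and Hopf lemma, or the quantitative estimates of \cite{Hanke:2011}) is the right way to close this and is only sketched, but you should be aware that the paper's own proof does not supply it either --- it is implicitly delegated to \cite{Hanke:2011} in the main text. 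So nothing the paper actually proves is missing from your argument, and you have correctly isolated the one step that neither write-up makes fully explicit.
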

\begin{proof}
Let $v^h$ be the solution of 
\begin{equation}
  \left\{\begin{array}{l}
    \Delta v^h = 0 \quad\mbox{in}~\Omega , \\
    \frac{\partial}{\partial n}v^h = -\frac{1}{2\pi h^2}\left(\chi_{\mE^h_+}-\chi_{\mE^h_-}\right) \quad\mbox{on}~\partial\Omega , \\
    \lim_{|x|\rightarrow\infty}v^h(x)=0 .
  \end{array}\right.
\end{equation}
Then $v^h$ can be represented by
\begin{equation}
  v^h(x)=\frac{1}{\pi h^2}\mS\left[\chi_{\mE^h_+}-\chi_{\mE^h_-}\right](x)
  \quad\mbox{for}~x\in\Omega .
\end{equation}
For $x\in\Xi_R$, we get
\begin{equation}
  v^h(x)-v(x) = \frac{1}{\pi h^2}\int_{\mE^h_+}\Phi(x,y)-\Phi(x,p_+)ds_y-\frac{1}{\pi h^2}\int_{\mE^h_-}\Phi(x,y)-\Phi(x,p_-)ds_y .
\end{equation}
It follows from mean-value theorem that, for $x\in\Xi_R$,
\begin{eqnarray}
  |v^h(x)-v(x)|
	&\le&
	h \left(\sup_{y\in\mE_+^h}|\nabla\Phi(x,y)|+\sup_{y\in\mE_-^h}|\nabla\Phi(x,y)|\right)
	\\&\le&
	\frac{h}{4\pi(|x-p_+|-h)^2}+\frac{h}{4\pi(|x-p_-|-h)^2} .
\end{eqnarray}
Since $2h<|x-p_\pm|$ for $x\in\Xi_R$, we have
\begin{equation}
  |v^h(x)-v(x)| \le  \frac{h}{\pi|x-p_+|^2}+\frac{h}{\pi|x-p_-|^2} .
\end{equation}
Therefore
\begin{equation}
  \|v^h-v\|_{H^1(\Xi_R)} \le Ch\left(\frac{1}{\sqrt{R}}+\frac{1}{R\sqrt{R}}\right) .
\end{equation}

For $x\in\Omega$, $\widetilde{u}^h$ and $v^h$ can be represented by
\begin{eqnarray*}
  \widetilde{u}^h(x) &=&
  -2\mS\left[\frac{\partial \widetilde{u}^h}{\partial n}\chi_{\mE^h_+}\right](x)-2\mS\left[\frac{\partial \widetilde{u}^h}{\partial n}\chi_{\mE^h_-}\right](x) ,
  \\
  v^h(x) &=&
  -2\mS\left[\frac{\partial v^h}{\partial n}\chi_{\mE^h_+}\right](x)-2\mS\left[\frac{\partial v^h}{\partial n}\chi_{\mE^h_-}\right](x).
\end{eqnarray*}
Then, for $x\in\Xi_R$,
\begin{equation}
  \widetilde{u}^h(x)-v^h(x)
  =
  -2\int_{\mE^h_+} \Phi(x,y)\eta^h_+(y) ~ds_y
  -2\int_{\mE^h_-} \Phi(x,y)\eta^h_-(y) ~ds_y , 
\end{equation}
where $\eta^h_\pm:=\frac{\partial \widetilde{u}^h}{\partial n}-\frac{\partial v^h}{\partial n}$ on $\mE^h_\pm$.
Since $\int_{\mE_\pm}\eta^h_\pm(y) ~ds_y=0$, 
\begin{eqnarray*}
  \widetilde{u}^h(x)-v^h(x)
  &=&
  -2\int_{\mE^h_+} (\Phi(x,y)-\Phi(x,p_+))\eta^h_+(y) ~ds_y
  \\ &~&
  -2\int_{\mE^h_-} (\Phi(x,y)-\phi(x,p_-))\eta^h_-(y) ~ds_y .
\end{eqnarray*}
It follows from mean-value theorem that for $x\in\Xi_R$
{\small \begin{equation}
  |\widetilde{u}^h(x)-v^h(x)|
  \le
  2h\sup_{y\in\mE_+^h}|\nabla\Phi(x,y)|\int_{\mE^h_+}|\eta^h_+(y)| ~ds_y
  +
  2h\sup_{y\in\mE_-^h}|\nabla\Phi(x,y)|\int_{\mE^h_-}|\eta^h_-(y)| ~ds_y .
\end{equation}  }
A similar argument shows that
\begin{equation}
  \|\widetilde{u}^h-v^h\|_{H^1(\Xi_R)} \le C'h\left(\frac{1}{\sqrt{R}}+\frac{1}{R\sqrt{R}}\right)\left(\|\eta^h_+\|_{L^1(\mE^h_+)}+\|\eta^h_-\|_{L^1(\mE^h_-)}\right) .
\end{equation}
\end{proof}  

We can extend the above lemma for the half-space case to bounded domains as follows.
\begin{theorem}
Let $\Omega$ be a simply connected domain in $\mathbb{R}^3$.
Let $\mE^h_+$ and $\mE^h_-$ be circular electrodes centered at points $p_+$ and $p_-$ on $\partial\Omega$, respectively, with radius $h>0$.
Let $\widetilde{u}^h\in H^1(\Omega)$ satisfy
\begin{equation}
  \left\{\begin{array}{l}
    \Delta \widetilde{u}^h = 0 \quad \text{in}~ \Omega ,\\
    \left.\widetilde{u}^h+z_\pm\frac{\partial \widetilde{u}^h}{\partial n}\right|_{\mE^h_\pm} = \widetilde{P}^h_\pm ,\\
	\int_{\mE^h_+}\frac{\partial \widetilde{u}^h}{\partial n} ~ds = I =-\int_{\mE^h_-}\frac{\partial \widetilde{u}^h}{\partial n} ~ds ,\\
	\frac{\partial \widetilde{u}^h}{\partial n} = 0 \quad \mbox{on}~\partial\Omega\setminus\left(\mE^h_+\cup\mE^h_-\right) , \\
    \int_{\partial\Omega}\widetilde{u}^h ~ds=0 , 
  \end{array}\right.
  \label{eq:CEMu2}
\end{equation}
where $z_\pm$ and $\widetilde{P}^h_\pm$ are constants.
Let $v\in H^1(\Omega)$ satisfy
\begin{equation}
  \left\{\begin{array}{l}
    \Delta v = 0 \quad \text{in}~ \Omega, \\
    \frac{\partial}{\partial n}v(x) = I (\delta(x-p_+) - \delta(x-p_-)) \quad \text{for}~ x\in\partial\Omega ,\\
		\int_{\partial\Omega} v ~ds = 0 .
  \end{array}\right.
  \label{eq:PEMv2}
\end{equation}
For given $R>0$, there exists a constant $C$ such that, for $2h<R$, 
\begin{equation}
  \|\widetilde{u}^h - v\|_{H^1(\Xi_R)} \le C h , 
\end{equation}
where $\Xi_R := \{x \in \Omega ~|~ |x-p_+|>R,~ |x-p_-|>R\}$.
\label{thm:CEMisPEM}
\end{theorem}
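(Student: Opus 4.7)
The plan is to follow the three-step structure of Lemma \ref{lem:CEMisPEM}, replacing the free-space fundamental solution $\Phi$ by the Neumann function $\mN_1(x,y)$ of $\Omega$ and replacing the decay condition at infinity by the normalization $\int_{\partial\Omega}\cdot\,ds=0$. Accordingly, I would first introduce the auxiliary potential $v^h\in H^1(\Omega)$ solving
\begin{equation*}
\Delta v^h=0 \text{ in } \Omega,\qquad \frac{\partial v^h}{\partial n}=\frac{I}{\pi h^2}\bigl(\chi_{\mE_+^h}-\chi_{\mE_-^h}\bigr) \text{ on } \partial\Omega,\qquad \int_{\partial\Omega}v^h\,ds=0,
\end{equation*}
so that by the triangle inequality it suffices to prove $\|v^h-v\|_{H^1(\Xi_R)}\le Ch$ and $\|\widetilde{u}^h-v^h\|_{H^1(\Xi_R)}\le Ch$.

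Next, Green's identity together with the defining properties of $\mN_1$ (in particular the constant co-normal trace $-\partial_{n_y}\mN_1(x,y)=1/|\partial\Omega|$) and the zero-mean normalization yield the closed-form representations
\begin{equation*}
v(x)=I\bigl(\mN_1(x,p_+)-\mN_1(x,p_-)\bigr),\qquad v^h(x)=\frac{I}{\pi h^2}\int_{\mE_+^h}\mN_1(x,y)\,ds_y-\frac{I}{\pi h^2}\int_{\mE_-^h}\mN_1(x,y)\,ds_y.
\end{equation*}
Writing $\mN_1=\Phi+R$ with $R$ jointly smooth up to the boundary, the gradient bound $|\nabla_y\mN_1(x,y)|\le C|x-y|^{-2}$ valid on $\Xi_R\times\mE_\pm^h$ combined with the mean-value theorem (applied to $\mN_1(x,y)-\mN_1(x,p_\pm)$ inside the electrode integrals) produces
\begin{equation*}
|v^h(x)-v(x)|\le Ch\bigl(|x-p_+|^{-2}+|x-p_-|^{-2}\bigr)\quad\text{for } x\in\Xi_R,
\end{equation*}
together with an analogous pointwise bound on the gradient. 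Integrating over $\Xi_R$ yields the first of the two estimates, exactly as in the lemma.

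For the second estimate, the Neumann representation applied to $\widetilde{u}^h-v^h$ gives
\begin{equation*}
\widetilde{u}^h(x)-v^h(x)=\int_{\mE_+^h}\mN_1(x,y)\eta_+^h(y)\,ds_y+\int_{\mE_-^h}\mN_1(x,y)\eta_-^h(y)\,ds_y,
\end{equation*}
where $\eta_\pm^h:=(\partial_n\widetilde{u}^h-\partial_n v^h)|_{\mE_\pm^h}$ has zero mean on each electrode because both $\partial_n\widetilde{u}^h$ and $\partial_n v^h$ integrate to $\pm I$ there. Subtracting $\mN_1(x,p_\pm)$ from $\mN_1(x,y)$ inside each integral and reapplying the mean-value theorem extracts an additional factor $h$, leading to
\begin{equation*}
|\widetilde{u}^h(x)-v^h(x)|\le Ch\bigl(|x-p_+|^{-2}+|x-p_-|^{-2}\bigr)\bigl(\|\eta_+^h\|_{L^1(\mE_+^h)}+\|\eta_-^h\|_{L^1(\mE_-^h)}\bigr).
\end{equation*}
The main obstacle is the uniform control of $\|\eta_\pm^h\|_{L^1}$ as $h\to 0$: the mean-value step alone would only deliver a bound proportional to this $L^1$ norm. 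I would handle it via Cauchy--Schwarz, $\|\eta_\pm^h\|_{L^1(\mE_\pm^h)}\le|\mE_\pm^h|^{1/2}\|\eta_\pm^h\|_{L^2(\mE_\pm^h)}\sim h\,\|\eta_\pm^h\|_{L^2(\mE_\pm^h)}$, and then invoke the variational CEM framework of \cite{Somersalo:1992}, which together with the uniform energy estimate $\|\widetilde{u}^h\|_{H^1(\Omega)}\le C$ yields $\|\eta_\pm^h\|_{L^2(\mE_\pm^h)}=O(h^{-1})$, keeping the product bounded. Plugging back in and integrating over $\Xi_R$ closes the second estimate and completes the proof.
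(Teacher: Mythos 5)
Your proposal follows essentially the same route as the paper's own proof: the same auxiliary potential $v^h$ with mollified Neumann data, the same Neumann-function representations of $v$, $v^h$, and $\widetilde{u}^h$, and the same mean-value-theorem argument exploiting the zero mean of $\eta^h_\pm$ on each electrode. The only difference is that you explicitly address the uniform boundedness of $\|\eta^h_\pm\|_{L^1(\mE^h_\pm)}$ as $h\to 0$ (via Cauchy--Schwarz and the energy estimates of the complete electrode model), a point the paper's proof leaves implicit in its final bound.
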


\begin{proof}Let $v^h$ be the solution of the following equation
\begin{equation}
  \left\{\begin{array}{l}
    \Delta v^h = 0 \quad\mbox{in}~\Omega , \\
    \frac{\partial}{\partial n}v^h = \frac{I}{\pi h^2}\left(\chi_{\mE^h_+}-\chi_{\mE^h_-}\right) \quad\mbox{on}~\partial\Omega , \\
    \int_{\partial\Omega}v^h ds=0 .
  \end{array}\right.
\end{equation}
Then $v^h$ and $v$ can be represented by
\begin{equation}
  v^h(x)=\frac{I}{\pi h^2}\int_{\mE^h_+}\mN_1(x,y)ds_y - \frac{I}{\pi h^2}\int_{\mE^h_-}\mN_1(x,y)ds_y
  \quad\mbox{for}~x\in\Omega,
\end{equation}
and 
\begin{equation}
  v(x)=I \left( \mN_1(x,p_{+}) - \mN_1(x,p_{-}) \right)
  \quad\mbox{for}~x\in\Omega.
\end{equation}
Hence, for $x\in\Xi_R$,
\begin{equation}
  v^h(x)-v(x) = \frac{I}{\pi h^2}\int_{\mE^h_+}\mN_1(x,y)-\mN_1(x,p_{+})ds_y-\frac{I}{\pi h^2}\int_{\mE^h_-}\mN_1(x,y)-\mN_1(x,p_{-})ds_y .
\end{equation}
Therefore, from mean-value theorem it follows that, for $x\in\Xi_R$, 
\begin{equation*}
  |v^h(x)-v(x)|	\le  I h \left(\sup_{y\in\mE_+^h}|\nabla\mN_1(x,y)|+\sup_{y\in\mE_-^h}|\nabla\mN_1(x,y)|\right).
\end{equation*}
According to the decay estimation for the Neumann function in \cite{Widman:1982} and the fact that $2h<|x-p_\pm|$ for $x\in\Xi_R$, there exists a positive constant $C$ such that
\begin{eqnarray*}
  |v^h(x)-v(x)| &\le& \frac{C I h}{(|x-p_+|-h)^2}+\frac{C I h}{(|x-p_-|-h)^2} \\
  &\le& \frac{4C I h}{|x-p_+|^2}+\frac{4C I h}{|x-p_-|^2} .
\end{eqnarray*}
Consequently, 
\begin{equation}
  \|v^h-v\|_{H^1(\Xi_R)} \le \tilde{C} I h \left(\frac{1}{\sqrt{R}}+\frac{1}{R\sqrt{R}}\right).
\end{equation}

The functions $\widetilde{u}^h$ and $v^h$ can be represented by
\begin{eqnarray*}
  \widetilde{u}^h(x) &=&
  \int_{\mE^h_+}\mN_1(x,y) \frac{\partial \widetilde{u}^h}{\partial n_y}(y) ~ds_y + \int_{\mE^h_-}\mN_1(x,y) \frac{\partial \widetilde{u}^h}{\partial n_y}(y) ~ds_y 
  \quad\mbox{for}~x\in\Omega,
  \\
  v^h(x) &=&
  \int_{\mE^h_+}\mN_1(x,y) \frac{\partial v^h}{\partial n_y}(y) ~ds_y + \int_{\mE^h_-}\mN_1(x,y) \frac{\partial v^h}{\partial n_y}(y) ~ds_y
  \quad\mbox{for}~x\in\Omega.
\end{eqnarray*}
Then, for $x\in\Xi_R$,
\begin{equation}
  \widetilde{u}^h(x)-v^h(x)
  =
  \int_{\mE^h_+}\mN_1(x,y) \eta^h_+(y) ~ds_y + \int_{\mE^h_-}\mN_1(x,y) \eta^h_-(y) ~ds_y  , 
\end{equation}
where $\eta^h_\pm(y):=\frac{\partial \widetilde{u}^h}{\partial n_y}(y)-\frac{\partial v^h}{\partial n_y}(y)$ on $\mE^h_\pm$.
Since $\int_{\mE_\pm}\eta^h_\pm(y) ~ds_y=0$,
\begin{eqnarray*}
  \widetilde{u}^h(x)-v^h(x)
  &=&
  \int_{\mE^h_+}( \mN_1(x,y) - \mN_1(x,p_+)) \eta^h_+(y) ~ds_y   
  \\ &~&
  + \int_{\mE^h_-}( \mN_1(x,y) - \mN_1(x,p_-)) \eta^h_-(y) ~ds_y .
\end{eqnarray*}
Again, from the mean-value theorem it follows that, for $x\in\Xi_R$, 
{\small \begin{equation}
  |\widetilde{u}^h(x)-v^h(x)|
  \le
  h\sup_{y\in\mE_+^h}|\nabla\mN_1(x,y)|\int_{\mE^h_+}|\eta^h_+(y)| ~ds_y
  +
  h\sup_{y\in\mE_-^h}|\nabla\mN_1(x,y)|\int_{\mE^h_-}|\eta^h_-(y)| ~ds_y .
\end{equation}  }
A similar argument shows that
\begin{equation}
  \|\widetilde{u}^h-v^h\|_{H^1(\Xi_R)} \le C'h\left(\frac{1}{\sqrt{R}}+\frac{1}{R\sqrt{R}}\right)\left(\|\eta^h_+\|_{L^1(\mE^h_+)}+\|\eta^h_-\|_{L^1(\mE^h_-)}\right).
\end{equation}
Therefore
\begin{equation}
  \|\widetilde{u}^h-v\|_{H^1(\Xi_R)} \le \|\widetilde{u}^h-v^h\|_{H^1(\Xi_R)} + \|v^h-v\|_{H^1(\Xi_R)} 
  \le Ch,
\end{equation}
which completes the proof of the theorem. 
\end{proof}


\end{document}